\newcommand{\N}{\mathbb{N}}
\newtheorem{theorem}{Theorem}[section]
\newtheorem{lemma}[theorem]{Lemma}
\newtheorem{corollary}[theorem]{Corollary}
\newtheorem{prop}[theorem]{Proposition}
\theoremstyle{definition}
\newtheorem{definition}[theorem]{Definition}
\newtheorem{remark}[theorem]{Remark}
\renewcommand{\P}{\mathbb{P}}
\newcommand{\x}{\mathbf{x}}
\newcommand{\y}{\mathbf{y}}
\newcommand{\1}{\mathbbm{1}}
\newcommand{\G}{\mathscr{G}}
\newcommand{\C}{\mathscr{C}}
\newcommand{\X}{\mathbf{X}}
\newcommand{\E}{\mathbb{E}}
\newcommand{\R}{\mathbb{R}}
\newcommand{\di}{\,\mathrm{d}}
\newcommand{\Z}{\mathbb{Z}}
\newcommand{\cG}{\mathcal{G}}
\newcommand{\e}{\mathrm{e}}
\newcommand{\deff}{\bar{\delta}_{\mathsf{eff}}}
\author{
	\Large\textbf{Christian M\"onch}\thanks{supported by Deutsche Forschungsgemeinschaft (DFG, German Research Foundation) -- grant no.\ 443916008/SPP2265.}
	\\Johannes Gutenberg University Mainz
	\\cmoench@uni-mainz.de
}
\title{\huge\textsc{Inhomogeneous long-range percolation in the weak decay regime}}
\date{\today}
\ifundef{\abstract}{}{\patchcmd{\abstract}%
	{\quotation}{\quotation\noindent\ignorespaces}{}{}}
\begin{document}
	\maketitle 
	\begin{spacing}{0.9}
		\begin{abstract} 
			We study a general class of percolation models in Euclidean space including long-range percolation, scale-free percolation, the weight-dependent random connection model and several other previously investigated models. Our focus is on the \emph{weak decay regime}, in which inter-cluster long-range connection probabilities fall off polynomially with small exponent, and for which we establish several structural properties. Chief among them are the continuity of the bond percolation function and the transience of infinite clusters. 
			
			\vspace{0.5cm}
			\noindent\normalsize{{\textbf{AMS-MSC 2020}: 05C80 (Primary), 60K35 (Secondary)}
				\\{\bf Key Words}: long-range percolation, phase transition, transience, weight-dependent random connection model}
			
		\end{abstract}
	\end{spacing}
\section{Introduction and overview}\label{sec:intro}
We study \emph{(edge-)inhomogeneous percolation models} of the following type: let $\eta \subset\R^d, d\geq 1$, denote a stationary ergodic point set of unit intensity. Canonical choices of $\eta$ are a homogeneous Poisson point process or the integer lattice $\Z^d$. A more detailed discussion of the class of underlying point sets for which our results hold is given in Section~\ref{sec:Results} together with an alternative and more rigorous construction of the model. Let $\eta'$ denote an independent marking of $\eta$ by i.i.d.\ $\operatorname{Uniform}(0,1)$ random variables. We call $\x=(x,s)\in\eta'$ a \emph{vertex} in \emph{location} $x\in\eta$ with \emph{(vertex) mark} $s\in(0,1)$. We denote by $\G=\G_{\phi,\eta}=({V}(\G),{E}(\G))$ the random geometric graph obtained by first choosing $V(\G)=\eta'$ and then, conditionally on $\eta'$, generating the edge set $E(\G)$ by adding unoriented edges between $\x,\y\in\eta'$ independently with probability
\[
1-\e^{-\phi(\x\y)},\quad \x\y\in(\eta')^{[2]},
\]
where $A^{[2]}=\{ B\subset A : |B|=2 \}$ for any set $A$ and $\phi(\x\y)$ is the \emph{connection function} of the model. Note that, we always write $\x\y$ for the set $\{\x,\y\}$, when we refer to edges.

\medskip

We focus on the spatially homogeneous case in which 
\[
\phi(\x\y)=\varphi(s,t,|x-y|),\quad \x=(x,s),\y=(y,t),
\]
where $\varphi:(0,1)\times(0,1)\times (0,\infty)$ is a function of marks and mutual vertex distance only (since we consider unoriented edges, this requires $\varphi$ to be symmetric in the first two arguments). Here, $|\cdot|$ denotes Euclidean distance, but all our results remain true for any other norm on $\R^d$. We only consider $\varphi$ which are non-increasing in each argument. Together with a corresponding assumption on $\eta$, this ensures that the weight-dependent random connection model has non-negative correlations, see Section~\ref{sec:Results} below. A large variety of previously studied percolation models can be obtained as instances of $\G_{\eta,\phi}$ by a suitable choice of $\eta$ and $\phi$. Some of the most important ones are
\begin{itemize}
	\item classical i.i.d.\ Bernoulli($p$) bond percolation: $\eta=\Z^d$, $$\varphi(s,t,r)=-\log(1-p)\1\{r=1\};$$
	\item \emph{long-range percolation} \cite{NewmaSchul86}: $\eta=\Z^d$, $\varphi(s,t,r)=\beta r^{-\delta d}$, for $\beta>0$;
	\item \emph{scale-free percolation} \cite{DeijfHofstHoogh13}: $\eta=\Z^d$, $\varphi(s,t,r)=\beta s^{-\gamma}t^{-\gamma} r^{-\delta d}$, for $\beta>0$ and suitably chosen exponents $\gamma,\delta$;
	\item the \emph{weight-dependent random connection model} \cite{GracaHeydeMoencMoert22}: take a symmetric function $g:(0,1)^2\to(0,\infty)$ and a non-increasing, integrable function $\rho:(0,\infty)\to[0,\infty)$, let
	\begin{equation}\label{eq:varphiviagrho}
		\varphi(s,t,r)=\rho(g(s,t)r^{d})\quad s,t\in(0,1), r\in(0,\infty),
	\end{equation}
	and let $\eta$ be a Poisson point process.
\end{itemize}
Note, however, that the results in this article pertain solely to genuine long-range models in which no upper bound on the length of potential edges exists. More precisely, our goal in the present article is to analyse the \emph{weak decay regime} of inhomogeneous long-range percolation, i.e.\ the regime in which $\delta_{\mathsf{eff}}<2$, where $\delta_{\mathsf{eff}}$ is the (dimension free) exponent of decay of the probability of a long-range connection between large clusters
\[\delta_\mathsf{eff}:=-\lim_{r\to\infty}{\log\left[\int_{r^{-d}}^{1}\int_{r^{-d}}^{1} \varphi(s,t,r) \di s\di t\right]}(d\log r)^{-1}.\]
We discuss this quantity (or rather a closely related one) formally in Section~\ref{sec:Results}, and assume for the moment that it is well-defined. Its role is akin to that of decay exponent $\delta$ in classical long-range percolation (see the examples above). The significance of $\delta_{\mathsf{eff}}$ had been conjectured in \cite{GracaHeydeMoencMoert22} and its importance for the existence of an infinite cluster for one-dimensional models was established in \cite{GracaLuchtMonch22}. In \cite{JahneLuecht23} it was shown that $\delta_{\mathsf{eff}}>2$ implies the existence of a subcritical phase in any dimension, even for models in which connection probabilities between vertices are allowed to depend on other vertices in their spatial vicinity.

\medskip

To give an intuition of how $\delta_{\mathsf{eff}}$ influences the structure of $\G$, assume that $\varphi$ is given via \eqref{eq:varphiviagrho}, for some kernel $g$ and $\rho(x)\asymp x^{-\delta}$. If the kernel $g$ is bounded away from $0$, then our model is merely an inhomogeneous perturbation of classical long-range percolation, for which it is well-known, see e.g.\ \cite{NewmaSchul86,Berge02}, that if $\rho$ decays weakly, namely if $\delta=\delta_{\mathsf{eff}}<2$, then the model feels little of the geometry of the embedding space $\R^d$ and behaves very unlike nearest neighbour percolation on $\Z^d$ and more like a short-range model in high dimensions in some aspects. As was already observed upon the invention of the model \cite{NewmaSchul86}, this is can be derived from the behaviour under rescaling: if one checks whether two large local clusters, each of size $N$, say, are connected directly by a long edge, then the `gain' obtained from independent trials associated with the $N^2-N$ pairs of vertices asymptotically beats the spatial decay of connection probabilities and one finds a connection with high probability if the distance of the clusters is $O(N^{1/d})$. This observation is at the heart of the classical renormalisation group arguments for long-range percolation with $\delta<2$. In particular, $\delta$ moderates both the inter-point connection probabilities and the inter-cluster connection probabilities at large scales, cf.\cite[Lemma 2.4]{Berge02}. However, as was first observed in \cite{GracaHeydeMoencMoert22}, this is not true in the general weight-dependent random connection model: if the kernel $g$ decays sharply at $0$, then the inter-cluster connection probabilities also depend on $g$ and the regime in which. Thus, the regime in which the model easily overcomes the geometric restrictions of the embedding space cannot be found by looking at $\delta$ alone. Instead, one needs to consider the derived exponent $\delta_{\mathsf{eff}}\leq \delta$ depending both on $\delta$ and $g$ and which naturally appears in renormalisation arguments, see \cite{GracaLuchtMonch22}. 
 
 \medskip
 
Below, we establish that $\delta_{\mathsf{eff}}<2$ is sufficient to imply a number of important results which where established for homogeneous long range percolation with $\delta<2$ in \cite{Berge02}. Namely, under varying assumptions on the underlying vertex locations $\eta$, we prove
\begin{itemize}
	\item an asymptotic density result for local clusters of sublinear size (Theorem~\ref{thm:sublinearclusters}),
	\item continuity of the bond percolation function for $\G$ in dimensions $2$ and above (Theorem~\ref{thm:continuity}),
	\item a robustness result for the infinite cluster und removal of long edges in dimensions $2$ and above (Theorem~\ref{thm:truncation}),
	\item transience of the infinite cluster (Theorem~\ref{thm:transience}).
\end{itemize}
There are several technical challenges that we have to overcome which complicate the analysis of the inhomogeneous model compared to long-range percolation. The most crucial one is the presence of additional strong dependencies induced by the vertex marks, which prevents the use of a number of well-established tools for i.id.\ (long-range) percolation. Another severe drawback is, as discussed above, that the inhomogeneity influences the scaling behaviour of the models: Coarse-graining a homogeneous long-range percolation model yields another homogeneous long-range percolation model, whereas coarse-graining $\G_{\eta,\phi}$ does not lead to a model that can be readily related to some suitable $\G_{\eta',\phi'}$. The solution of the first problem can be considered as the main contribution of this work on a technical level: We establish renormalisation techniques akin to those in \cite{Berge02} that rely solely on non-negative correlations instead of independence. In particular, our proofs are novel even for homogeneous long-range percolation and some of our results ar even new for this special case, namely if $\eta$ is not Poisson, deterministic or an i.i.d.\ percolated lattice. Unfortunately, we were not able to overcome the second challenge mentioned above in a similarly comprehensive manner and this is partly reflected in our main results -- most notably we were not able to show that the bond percolation function is continuous throughout the whole weak decay regime if $d=1$.

%



\paragraph{Notation.}
Throughout the article, we use the Landau symbols $f(x)= O(g(x)), f(x)= o(g(x))$ and write $f(x)\asymp g(x)$ if both $f(x)=O(g(x))$ and $g(x)=O(f(x))$. We use $f(x)\sim g(x)$ to denote the stronger statement that $f(x)/g(x)$ converges to $1$.
\paragraph{Overview of the paper.}
In the next section, we provide a formal construction of our model, present our main results and discuss them in more detail. Section~\ref{sec:PercolationFB} contain the proof of Theorem~\ref{thm:sublinearclusters}, which forms the basis of all other main results. Transience of the infinite cluster is obtained in Section~\ref{sec:transience} and the remaining results are proved in Section~\ref{sec:continuity}.

\section{Model definition and main results}\label{sec:Results}
Before formulating our main results, we provide a rigorous construction of the model and definitions of the key quantities and notions involved. 
\paragraph{Construction from a doubly marked point process.} Among our fundamental assumptions are that $\G$ has a unique (if any) infinite component, and that its distribution is the same everywhere in space. We begin by discussing which vertex location sets $\eta$ fall within our framework. Let $\Lambda\subset\R^d$ be either $\Z^d$ or $\R^d$ and let $\eta$ denote a simple point process\footnote{If $\Lambda$ is discrete, then a `point process' on $\Lambda$ is just a random subset.} of finite intensity on $\Lambda$, which is stationary and ergodic under $\P$ with respect to the natural group of shifts $(T_x)_{x\in\lambda}, T_x(y)=x+y$ for all $y\in \Lambda$, associated with $\Lambda$.
\begin{remark}
The choice $\Lambda=\Z^d$ is the most natural one for the discrete set up. However, we do not use any symmetry properties specific to $\Z^d$. Our results are based on renormalisation arguments which use half cubes of the form $(-a,a]^d$ and their translates. All our results remain valid, if one chooses $\Lambda=\{Bz,z\in\Z^d\}$, where $B$ is some non-singular $d\times d$-matrix and replaces the cubes by the the corresponding parallelepiped. This changes a few constants appearing below relating volumes and distances but does not alter the content of the theorems. The same applies of course to adapting the norm $|\cdot|$ to $\Lambda$ -- it usually more natural to work with the corresponding lattice distance on $\Lambda$ instead of Euclidean distance.
\end{remark}

The canonical examples for $\eta$ are a homogeneous Poisson process and i.i.d.\ $\operatorname{Bernoulli}(p)$ percolation on $\Z^d$ with $p\in (0,1]$. For simplicity, we assume that $\E\eta((-1/2,1/2]^d)=1$, this can always be achieved by a straightforward rescaling of the ambient space. Although some parts of our considerations are valid under the sole assumptions of ergodicity and positive correlations on $\eta$ (see the paragraph below), some of our main results require a stronger control on dependencies. We say that $\eta$ has \emph{finite range}, if there exists some number $K$ such that $\eta(A)$ and $\eta(B)$ are independent, if $A,B\subset\R^d$ are at distance further than $K$ of each other. Often, it is convenient to view $\eta$ from a typical point, hence we frequently work with the Palm version $\eta_0$ of $\eta$ that has a point at the origin $0\in\R^d$. Note that $\eta$ is translation invariant under shifts of $\Lambda$ if and only if, under $\P_0$, $\eta$ is invariant under shifting the origin into another typical point of $\eta$, see \cite{Thori99}. Our model is now constructed as a deterministic functional of the points of $\eta$, and two independent i.i.d.\ sequences of edge and vertex marks. Let $X_1,X_2,\dots$ denote an enumeration of $\eta$ and let $\mathcal{T}=\{T_j: j\in \Z\}$ be a family of i.i.d.\ random variables distributed uniformly on $(0,1)$ independent of $\eta$. Set
\[\eta'=\{\X_j=(X_j,T_j)\in\eta_0\times\mathcal{T}: j\in\Z \text{ such that }X_k<X_\ell \text{ for } k<\ell\},\]
hence, $\eta'$ is a point process on $\Lambda\times(0,1)$ with unit intensity. Let further $\mathcal{V}=\{V_{i,j}: i<j\in\Z\}$ be a second family of i.i.d.\ Uniform$(0,1)$ random variables, independent of $\eta'$, that we call \emph{edge marks}, which we assign to the elements of $(\eta')^{[2]}$. We denote the point and edge marked process by $\xi$. For given $\varphi$, the graph $\G$ is now deterministically constructed from $\xi$ as the graph with vertex set $\eta$ and edge set
\[\Big\{\X_i\X_j: V_{i,j}\leq 1-\textup{e}^{-\varphi(T_i,T_j,|X_i-X_j|)}\Big\}.\]
Palm versions $\eta'_0, \xi_0$ and $\G_0$ of $\eta',\xi$, and $\G $, respectively, are obtained by replacing $\eta$ in the above construction by $\eta_0$. In the remainder of the paper, the enumeration of the locations plays no role and we usually denote vertices $\x=(x,s)\in\eta'$ as in the introduction and occasionally write $s_{x}$ for the mark of vertex $\x$ in location $x$, and $V_{\x\y}$ for the edge mark associated with the pair $\x\y\in(\eta')^{[2]}$.

Finally, to assure that there is at most $1$ infinite component in $\G$, $\eta$ and $\phi$ should satisfy a suitable `finite energy property', c.f.\ \cite{GandolKeaneNewma92,BurtoMeest93,Meest94} and we shall always assume tacitly that this is the case.

\paragraph{Monotonicity and positive correlation.} 
The inverse vertex mark $s_{x}^{-1}$ can be viewed as weight or fitness of the vertex in location $x\in\eta$, (giving the weight-dependent random connection model its name), the likelihood of connections should be increasing with weight and proximity. Our arguments heavily use the weak FKG-property and to obtain non-negative correlations, we require $\varphi$ to be decreasing in all three arguments. Formally, an \emph{increasing map} of a doubly marked point configuration $\xi$ does not decrease if either
\begin{itemize}
	\item vertices are added to $\eta'$,
	\item vertex marks are decreased,
	\item edge marks are decreased.
\end{itemize}
In particular, if we interpret $\G$ as a map on marked point configurations, it is increasing in the above sense for the canonical partial order on random geometric graphs. An increasing event $E\in \sigma(\xi)$ is such that $\1_E$ is an increasing functional of $\xi$. We require $\G$ to satisfy the weak FKG-property, i.e.\ we have that
\[
\E f(\xi)g(\xi) \geq \E f(\xi)\E g(\xi)
\]
for any increasing functionals $f,g$ on configurations $\xi$. A sufficient condition for this is that $\varphi(\cdot,\cdot,\cdot)$ is non-increasing in all $3$ arguments and that $\eta$ has the weak FKG-property, i.e.\
\[
\E f'(\eta)g'(\eta) \geq \E f'(\eta)\E' g(\eta),
\]
where $f',g'$ are non-decreasing functionals of point configurations under addition of points, c.f.\ \cite{HeydeHofstLastMatzk19,GracaHeydeMoencMoert22} for related constructions.
\begin{remark}
Note that the stated conditions on $\varphi$ and $\eta$ suffice, because the edge and vertex marks are added in an i.i.d.\ fashion. However, the monotonicity assumption on $\varphi$ can be relaxed, e.g.\ if $\eta$ is a Poisson process. Then increasing the intensity of $\eta$ can always be realised by adding another independent Poisson process and thus always increases the resulting graph $\G$, even if $\rho$ is not monotone. On the other hand increasing intensity and contracting space, i.e.\ reducing all inter-location distances, are equivalent. A different direction in which our setup can be generalised is to weaken the requirement of positive correlation on $\eta$, as long as the marks remain independent of $\eta$, since most calculations only require that the model has positive correlations conditionally on $\eta$. Similarly, we believe that our techniques can be adapted without much effort to certain situations in which edge and vertex marks are weakly dependent upon each other or even upon $\eta$, as long as vertex marks and edge marks remain positively correlated. We have not attempted to strive for the most general results in this respect, since the main motivation for our model was to cover all Poisson and lattice based models with i.i.d.\ marks that have so far been treated in the literature in a unified setting. A model with strong positive correlations that is not covered by our approach but might be amenable to certain techniques from the present paper is the \emph{spatial preferential attachment model}\cite{JacobMorte15,JacobMorte17}.
\end{remark}

\paragraph{Weak decay regime.}
We now give a precise definition of the exponent $\delta_{\mathsf{eff}}$ discussed in the introduction. For any $\mu\in[0,1)$ we may set
\[
\deff(\mu):=-\liminf_{r\to\infty}{\log\left[\int_{r^{d(\mu-1)}}^{1-r^{d(\mu-1)}}\int_{r^{d(\mu-1)}}^{1-r^{d(\mu-1)}} \varphi(s,t,r) \di s\di t\right]}(d\log r)^{-1}
\]
By monotonicity, the limit $\deff(0+)=\lim_{\mu\downarrow 0}\deff(\mu)$ exists and we say that $\varphi$ is weakly decaying, if
\[
\deff(0+)<2.
\] 
The standard situation is that both $\deff(0+)=\deff(0)$ and that the $\liminf$ in the definition of $\deff(0)$ can be replaced by an actual limit. In this case, $\deff(0)$ coincides with the exponent $\delta_{\mathsf{eff}}$ discussed in the Section~\ref{sec:intro}. In particular, this is the case in the homogeneous case in which $\varphi$ can be represented as a function $\rho(\cdot)$ of inter-location distance only that satisfies $\rho(z)\asymp z^{-\delta}$. There, we see immediately that $\deff(0)=\deff(0+)=\delta$, cf.\ \cite{GracaLuchtMonch22}.

\paragraph{Percolation.} For $\x\in\eta'$, we write $\C_x$ for the connected component of $\x=(x,s)$ in $\G$. More generally, if $\cG$ is any stationary ergodic geometric random graph, we write $\C_x(\cG)$ for the (possibly empty if $x$ is no point of $\eta$) connected component $\cG$ of the vertex located in $x$. The maximal component of $\cG$ is denoted by $\C_{\mathsf{max}}(\cG)$ or $\C_{\infty}(\cG)$ if it happens to be of infinite size (recall that we exclusively consider situations in which $\C_{\infty}(\cG)$ is unique). Set now
\[\theta_{\cG}=\P(|\C_0(\cG_0)|=\infty),\]
where $\cG_0$ is the Palm version of $\cG$ (the latter always exists, since $V(\cG)$ must be distributionally invariant under shifts along $\Lambda$ by stationarity). By ergodicity, we have that $\theta_{\cG}\in[0,1]$ is constant and corresponds to the density of the infinite component.

 Our first and most general result localises the existence of an infinite cluster. We use the notation $\G[\Gamma^n]$ to denote the subgraph induced in $\G$ by the vertices located in $\Gamma^n=(-n/2,n/2]^d$.
\begin{theorem}[Local clusters of sublinear size are asymptotically dense]\label{thm:sublinearclusters}
	Let $\G$ denote an instance of inhomogeneous long-range percolation on a stationary, ergodic and positively correlated point set $\eta$ such that $\deff(0+)<2$. If $\theta_\G>0$, then for every $\lambda\in(0,1)$, we have
	\[
	\lim_{n\to\infty}\P\big(\big|\C_{\mathsf{max}}(\G[\Gamma^n])\big|> n^{\lambda d}\big)=1.
	\]
\end{theorem}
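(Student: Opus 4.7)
\textbf{Proof sketch for Theorem~\ref{thm:sublinearclusters}.} The plan is a block renormalisation in the spirit of \cite{Berge02}. Since the conclusion is monotone decreasing in $\lambda$, it is enough to prove the statement for $\lambda$ arbitrarily close to $1$. Fix such a $\lambda$ and pick $\mu \in (\lambda, 1)$; set $m=\lfloor n^\mu\rfloor$. Because $\Gamma^m \subset \Gamma^n$, every component of $\G[\Gamma^m]$ is contained in a component of $\G[\Gamma^n]$, and it suffices to exhibit, with probability tending to $1$, a component of $\G[\Gamma^m]$ of size at least $n^{\lambda d}$. To this end I would subdivide $\Gamma^m$ into sub-cubes of side $\ell = \lfloor m^\beta \rfloor$ with some $\beta \in (\deff(0+)/2, 1)$ (possible since $\deff(0+)<2$), show that each sub-cube carries a local ``good'' cluster of size $\asymp \ell^d$ with probability approaching $1$, and then merge a positive fraction of these good clusters by direct long edges inside $\Gamma^m$.

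The crucial ingredient is the existence of local good clusters. Stationarity, ergodicity, and $\theta_\G>0$ yield $|\C_\infty \cap B|=(1+o(1))\,\theta_\G \ell^d$ for each sub-cube $B$ with high probability. The nontrivial point is to verify that a constant fraction of these infinite-cluster vertices are interconnected within a bounded enlargement $B^+$ of $B$: restricting to $\G[B^+]$ can a priori fragment the local piece of $\C_\infty$ by cutting long edges. I would address this via an FKG-based bootstrap -- the event ``$B$ contains a connected component of $\G[B^+]$ of size $\geq c\,\ell^d$'' is increasing, so one can compare it with the increasing event ``$B$ intersects $\C_\infty$ in at least $c'\,\ell^d$ vertices'' using the positive correlation of $\eta$, the independence of the marks, and the decay estimate on $\varphi$ coming from $\deff(0+)<2$ to control the probability that the cut long edges actually disconnect a substantial part of $\C_\infty \cap B$.

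For the subsequent merging, the expected number of direct edges between two good sub-cubes at distance $O(m)$ is bounded below by a constant times $\ell^{2d}\,m^{-d\deff(\mu')+o(1)}$ for any small $\mu'>0$, by the integral definition of $\deff$ together with the monotonicity of $\varphi$ in its distance argument. Because $\beta > \deff(0+)/2$, one may pick $\mu'$ so small that this quantity diverges; a conditional second-moment argument applied to the independent edge marks between distinct pairs of sub-cubes, combined with FKG, then yields that a positive fraction of good sub-cubes lie in a single component of $\G[\Gamma^m]$ with high probability, which produces a cluster of size $\asymp m^d \geq n^{\lambda d}$. The main obstacle is the independence-free proof of the first step: in the i.i.d.\ long-range setting of \cite{Berge02} one can couple directly to supercritical Bernoulli site percolation on the block lattice, but here the vertex marks create long-range dependencies that preclude such a coupling, forcing the entire bootstrap to be carried out through monotone-event comparisons and the FKG inequality.
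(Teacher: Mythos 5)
There is a genuine gap, and it is at the very first step of your scheme. You claim that each sub-cube $B$ of side $\ell$ carries, with probability tending to $1$, a connected component of $\G[B^+]$ (a \emph{bounded} enlargement of $B$) of size $\asymp \ell^d$. That is a statement about \emph{linear}-size local clusters, which is strictly stronger than what Theorem~\ref{thm:sublinearclusters} asserts; the paper explicitly notes that linear-size local clusters are open in this generality and deferred to future work. Your FKG bootstrap does not close this hole: both events you compare are increasing, and FKG gives positive correlation, not an implication from $\{|\C_\infty\cap B|\gtrsim \ell^d\}$ to $\{\G[B^+]\text{ has a component }\gtrsim \ell^d\}$. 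Cutting all edges leaving $B^+$ can shatter $\C_\infty\cap B$ into many microscopic pieces, and bounding the damage requires exactly the kind of multi-scale recursion you are trying to avoid (this is the content of \cite[Theorem~3.2]{Bisku04} even in the i.i.d.\ case, where it is already a serious theorem, not a corollary of ergodicity).

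The paper's argument avoids this trap in two essential ways that your two-scale scheme does not reproduce. First, the base of the recursion uses preclusters defined via $\G[\Delta_k\Gamma_0]$, where the enlargement parameter $k$ is allowed to depend on the \emph{fixed} base scale $\ell=m_0$ and may be much larger than $\ell$: ergodicity gives $\gtrsim\theta\ell^d$ infinite-cluster vertices in $\Gamma_0$, and then \emph{uniqueness} of $\C_\infty$ yields, for a possibly huge but finite $k(\ell)$, that these vertices are joined inside $\Delta_k\Gamma_0$. Because $k$ is fixed once $\ell$ is fixed, at higher stages $\Delta_k\Gamma_L$ is eventually contained in $\tilde\Gamma_L$, so the enlargement becomes harmless. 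Your proposal insists on a bounded (or proportional) enlargement at \emph{every} scale, which is precisely what cannot be justified. Second, the paper builds a genuine multi-scale hierarchy $m_n=\ell\prod\sigma_i$, with density parameters $\varrho_n\downarrow 0$, and the resulting ``alive'' clusters have size $v_n=\tfrac{\theta}{2}m_n^d\prod_{i\le n}\varrho_i$, which is \emph{sublinear} in the volume $m_n^d$ because $\prod\varrho_i\to 0$. This is exactly why the theorem gives $n^{\lambda d}$ with $\lambda<1$ rather than $\asymp n^d$. Your two-scale scheme tries to jump from scale $\ell$ directly to scale $m$ in one merging step, which would require linear local clusters at scale $\ell$ and does not degrade the density along the way; that is the structural feature you are missing. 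The merging step itself is sound in spirit and is essentially Lemma~\ref{lem:preciseconnection} in the paper (the $\mu$-regularity machinery plays the role of your ``decay estimate on $\varphi$''), though the paper replaces your second-moment argument with a direct union-bound-plus-FKG recursion that is cleaner to make rigorous in the presence of mark dependencies.
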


It stands to reason, that the assertion of Theorem~\ref{thm:sublinearclusters} can be improved to a stretched exponential bound on the probability of existence of a local cluster of \emph{linear size}, at least if we assume independence for $\eta$. We plan to address this in future work. The corresponding result for classical long-range percolation was established in \cite[Theorem 3.2]{Bisku04}.
%
%

Set now
\[\theta(p)= \theta_{\G^p},\quad p\in[0,1],\]
where $\G^p$ is obtained from $\G$ by independent Bernoulli bond percolation with retention probability $p$. When no additional percolation is involved we also write $\theta=\theta(1)$ for the density of the infinite cluster in $\G$. The following result states that $\theta(p)$ is a continuous function of $p$ in two or more dimensions as long as we remain in the weak decay regime.
\begin{theorem}[Continuity of bond percolation function]\label{thm:continuity}
If $d\geq 2$, $\G$ is locally finite, $\eta$ has finite range and $\bar\delta_{\mathsf{eff}}(0+)<2$, then
\[
p\mapsto\theta(p)
\]
is a continuous function on $[0,1]$.
\end{theorem}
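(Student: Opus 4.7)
Under the monotone coupling $\G^p=\{e\in E(\G): U_e\leq p\}$ with $(U_e)_{e\in E(\G)}$ i.i.d.\ Uniform$(0,1)$, the function $p\mapsto\theta(p)$ is non-decreasing, so continuity on $[0,1]$ is equivalent to the absence of jumps. Right-continuity is automatic: if $p_n\downarrow p$, then $\bigcap_n\G^{p_n}=\G^p$ almost surely (no edge mark equals $p$), and a K{\"o}nig-type diagonal extraction, enabled by local finiteness of $\G$, produces from a sequence of self-avoiding infinite paths in the $\G^{p_n}$ an infinite path in the intersection, whence $\theta(p_n)\downarrow\theta(p)$. For $p\in(p_c,1]$, left-continuity follows from the classical argument of \cite{Berge02}: by uniqueness of the infinite cluster (from finite energy) together with the weak FKG property, a vertex in the infinite cluster of $\G^p$ has infinitely many edge-disjoint paths to infinity, so almost surely at least one of them has supremum edge mark strictly less than $p$. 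Continuity on $[0,1]$ therefore reduces to establishing $\theta(p_c)=0$, where $p_c=\inf\{p:\theta(p)>0\}$.

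To prove $\theta(p_c)=0$, I argue by contradiction. Suppose $\theta(p_c)>0$; the strategy is block renormalisation combining Theorem~\ref{thm:sublinearclusters} with the weak decay $\deff(0+)<2$. Fix $\lambda\in(0,1)$ close to $1$ and partition $\Lambda$ into cubes $\Gamma^n_z=nz+(-n/2,n/2]^d$ for $z\in\Z^d$; call $\Gamma^n_z$ \emph{$p$-good} if $|\C_\mathsf{max}(\G^p[\Gamma^n_z])|\geq n^{\lambda d}$. Theorem~\ref{thm:sublinearclusters} applied to $\G^{p_c}$ yields $\P(\Gamma^n_z\text{ is }p_c\text{-good})\to 1$ as $n\to\infty$. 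Next, given two neighbouring good cubes, conditioning on the identity and marks of their largest local clusters makes the candidate long edges between them conditionally independent; their expected number in $\G^{p_c}$ diverges as a positive power of $n$ when $\lambda$ is close enough to $1$, using the definition of $\deff(0+)$ and the assumption $\deff(0+)<2$, which forces the direct-connection probability to tend to $1$. The finite-range hypothesis on $\eta$ together with local finiteness of $\G$ ensures that the resulting block process on $\Z^d$ has finite-range dependence, so by standard stochastic domination (valid in $d\geq 2$) it dominates a supercritical Bernoulli site percolation for $n$ large. Since each block event depends on only finitely many edge marks, its probability is continuous (in fact polynomial) in $p$, and combined with monotonicity this implies that the block process remains supercritical on some left neighbourhood $(p_c-\varepsilon,p_c]$, producing an infinite cluster in $\G^p$ for some $p<p_c$ and contradicting the definition of $p_c$.

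The principal technical obstacle is the block renormalisation under only weak FKG (due to shared vertex marks) and finite-range, rather than independent, $\eta$. All three hypotheses of the theorem -- $d\geq 2$, local finiteness of $\G$, and finite range of $\eta$ -- play a role at this step, together with the full strength of $\deff(0+)<2$ to ensure that inter-block connection probabilities approach $1$. A further subtlety is that the passage from $p=p_c$ block estimates to a left neighbourhood needs the block event probabilities to depend continuously on $p$; this rests on local finiteness, ensuring each block event involves only finitely many Bernoulli marks.
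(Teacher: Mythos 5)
Your proposal follows the same overall route as the paper: establish right-continuity from local finiteness, deal with the interval above $p_c$ by uniqueness/FKG (the paper instead cites van den Berg--Keane), and reduce the remaining question to showing that percolation at the critical parameter is stable under small perturbations via a block renormalisation on $\Z^d$, combining Theorem~\ref{thm:sublinearclusters} with stochastic domination (Liggett--Schonmann--Stacey) under the finite-range hypothesis. The paper's Proposition~\ref{prop:leftcont} carries out precisely this step, phrased for site percolation and then transferred to bond percolation by coupling; you argue directly with bond percolation, which is a cosmetic difference.

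There is, however, a genuine gap in your inter-block connection step. You argue that, conditioning on the identity and marks of the two largest local clusters, the expected number of candidate long edges between them diverges as a power of $n$, so the direct-connection probability tends to~$1$. This is not sound without control on the realised vertex marks: if the marks happen to cluster near the extreme ends of $(0,1)$, the summed connection intensities $\sum \varphi(s_i,t_j,\cdot)$ can remain small even for two clusters of the advertised size, because $\varphi$ may degenerate there. The quantity $\deff(0+)$ is defined by an integral that explicitly truncates away the mark extremes, so invoking $\deff(0+)<2$ only yields a usable lower bound when the empirical mark distributions of the two clusters are controlled. The paper achieves this through $\mu$-regularity: Lemma~\ref{lem:regular} shows large i.i.d.\ mark samples are $\mu$-regular with overwhelming probability, and Lemma~\ref{lem:preciseconnection} then gives a \emph{deterministic} lower bound on the conditional connection probability, uniform over $\mu$-regular configurations, expressed precisely in terms of the integral appearing in $\deff$. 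In Proposition~\ref{prop:leftcont} a $\lambda$-good cube is accordingly required to contain a $\mu(\lambda)$-regular cluster, and the existence of such clusters at high density comes from the renormalisation machinery underlying Theorem~\ref{thm:sublinearclusters}, not from that theorem's bare statement. So the phrase ``conditioning on the identity and marks of their largest local clusters'' points in the wrong direction: one must restrict to the event that the mark profile is regular, not condition on arbitrary realised marks. Once this regularity step is inserted, your argument matches the paper's.
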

Theorem~\ref{thm:continuity} extends \cite[Thm 1.5]{Berge02}, \cite[Cor. 4]{DepreHazraWuthr15} and \cite[Thm. 3.3]{DepreWuthr19} for $d\geq 2$. However, note that all three previous results correspond to the case in which $\deff(0+)$ trivially coincides with the a priori spatial decay exponent $\delta$ (in our notation based on the WDRCM). In particular, in the special case of \emph{scale-free percolation} \cite{DeijfHofstHoogh13,DepreHazraWuthr15,DepreWuthr19}, $\deff(0+)<2\leq \delta$ precisely if the critical threshold is $0$, in which case Theorem~\ref{thm:continuity} is standard. 
\begin{remark}
The conclusion of Theorem~\ref{thm:continuity} does not include $d=1$, unless $\delta=\bar\delta_{\mathsf{eff}}(0+)$ in which it is a straightforward and minor extension of \cite[Theorem 1.5]{Berge02}. This is rooted in the scaling behaviour of the inhomogeneous model: in general, coarse-grained versions of the model behave quite differently to the original model. For homogeneous long-range percolation the opposite is true, as discussed in the introduction, cf.\ \cite[Lemma 2.4]{Berge02}. In our renormalisation arguments, the tool used to connect large clusters is Lemma~\ref{lem:preciseconnection} below, which is only effective for clusters close enough to each other. Therefore our proof of Theorem~\ref{thm:continuity} relies on comparison with supercritical nearest-neighbour models, whereas the $d=1$ case would require a comparison with a suitable supercritical long-range model. The fact that percolation may occur in $d=1$ only inside the weak decay regime (or possibly at its boundary) was established in \cite{GracaLuchtMonch22} for the weight-dependent random connection model. However, the techniques used there are not strong enough to make assertions about the behaviour of $\theta(p)$ near the critical value.
\end{remark}

 It follows from the proof of Theorem~\ref{thm:continuity}, that percolation in $\G$ is also robust under edge truncation, which we formulate as our next theorem. Denote by $\G^{\{\ell\}}$ the graph obtained from $\G$ by removing all edges longer than $\ell>0.$
\begin{theorem}[Truncation property]\label{thm:truncation}
If $d\geq 2$, $\eta$ has finite range $\bar\delta_{\mathsf{eff}}<2$ and $\G$ percolates, then
\[
\lim_{\ell\to\infty}\theta_{\G^{\{\ell\}}}>0.
\]
\end{theorem}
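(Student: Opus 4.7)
The plan is to mirror the renormalisation that the author uses for Theorem~\ref{thm:continuity}, but with the edge-truncation playing the role of the independent thinning by Bernoulli bond percolation. Fix $\lambda\in(0,1)$ close to $1$, partition $\R^d$ into translates $B_z=\Gamma^n+nz$, $z\in\Z^d$, of $\Gamma^n$, and declare $B_z$ \emph{good} if the induced subgraph $\G[B_z]$ contains a connected component of size at least $n^{\lambda d}$. By stationarity and Theorem~\ref{thm:sublinearclusters}, since $\theta_\G>0$, the probability that $B_z$ is good tends to $1$ as $n\to\infty$. Crucially, every edge used inside such a local cluster has length at most $\sqrt{d}\,n$, so whenever $\ell\geq \sqrt{d}\,n$ the cluster is untouched by the truncation.

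For adjacent $z,z'\in\Z^d$, declare the pair \emph{linked} if the large local clusters in $B_z$ and $B_{z'}$ are joined by at least one direct edge of length at most $\ell$. If $\ell$ exceeds a suitable constant multiple of $n$, then every candidate cross-edge survives the truncation. Applying Lemma~\ref{lem:preciseconnection} (which is what drives the renormalisation step in the proof of Theorem~\ref{thm:continuity}) then yields, under $\bar\delta_{\mathsf{eff}}(0+)<2$, that the probability of being linked tends to $1$ as $n\to\infty$: the $n^{2\lambda d}$ effective cross-pair trials outpace the per-pair connection cost $\asymp n^{-\bar\delta_{\mathsf{eff}}(0+)\,d}$ once $\lambda$ is close enough to $1$. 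This relies, as elsewhere in the paper, on the weak FKG-property to replace the independence of classical long-range percolation.

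The resulting $\{0,1\}$-valued renormalised field on $\Z^d$, with $z$ open if $B_z$ is good and linked to each of its $2d$ nearest neighbours, is increasing in the underlying marked configuration $\xi$, has marginal probability approaching $1$ with $n$, and has finite-range dependence: the finite range of $\eta$ combined with the truncation at $\ell$ makes the state of $z$ measurable with respect to $\xi$ restricted to a bounded neighbourhood of $B_z$. The Liggett--Schonmann--Stacey domination theorem therefore shows that for $n$ sufficiently large the field stochastically dominates supercritical Bernoulli site percolation on $\Z^d$; since $d\geq 2$, the dominated process admits a unique infinite cluster of positive density, which realises inside $\G^{\{\ell\}}$ as an infinite component whose density is bounded below uniformly in $\ell\geq Cn$. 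Letting $\ell\to\infty$ (with $n$ fixed at a sufficiently large value) gives $\liminf_{\ell\to\infty}\theta_{\G^{\{\ell\}}}>0$.

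The principal obstacle is the short-range connection step: one must produce the cross-box edge of length $O(n)$ with high probability, since long edges may be removed by the truncation. The standard second-moment or union-bound computation that underpins Lemma~\ref{lem:preciseconnection} must therefore be executed in the correlated setting, which is where the weak FKG-property and the finite-range assumption on $\eta$ are both essential; the latter is also what allows the Liggett--Schonmann--Stacey comparison to go through. As flagged in the remark following Theorem~\ref{thm:continuity}, it is also this step that restricts the conclusion to $d\geq 2$: in $d=1$ one would need to feed the renormalisation output into a supercritical long-range comparison model rather than nearest-neighbour site percolation.
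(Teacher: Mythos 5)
Your construction is essentially the paper's argument. The paper proves the truncation property in one line by observing that the auxiliary infinite cluster built in the proof of Proposition~\ref{prop:leftcont} (left-continuity of the bond percolation function) uses no edge longer than $2\sqrt{d}m$, so truncating at any $\ell>2\sqrt{d}m$ leaves that cluster intact. Your box/link renormalisation is a rederivation of precisely that coarse-grained $3$-dependent site-bond model on $\mathfrak{C}(m)$: good cubes carry a dense local cluster, neighbouring good cubes are linked by a single edge of length $O(m)$ via Lemma~\ref{lem:preciseconnection}, and the resulting finite-range process dominates supercritical Bernoulli percolation on $\Z^d$ by Liggett--Schonmann--Stacey.

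One detail your write-up elides, however: you declare $B_z$ good when $\G[B_z]$ merely contains a component of size at least $n^{\lambda d}$, but Lemma~\ref{lem:preciseconnection} applies only to $(\mu,v)$-regular vertex sets. The paper's notion of $\lambda$-good cube requires this component to be $\mu(\lambda)$-regular, where $\mu(\lambda)$ is chosen so that $\deff\big(1-\lambda(1-\mu(\lambda))\big)<2$. Regularity is what makes the per-pair connection cost in your ``$n^{2\lambda d}$ trials beat the decay'' heuristic uniform over configurations: without it, the marks of a large local cluster could concentrate near $1$ and the linking probability would not tend to $1$. The fix is the one used in the paper --- combine Theorem~\ref{thm:sublinearclusters} with the observation that, for a fixed cube, the event of containing a $\mu$-regular cluster of the required size is increasing and can be arranged to have probability close to $1$ --- but it should be stated explicitly, since it is the point at which the bound $\deff(0+)<2$ actually enters.
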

Note that both Theorem~\ref{thm:sublinearclusters} and Theorem~\ref{thm:truncation} provide `locality' statements for percolation, i.e.\ if $\G$ percolates and $\G_n\to\G$ locally, then the $\G_n$ will percolate eventually. Variants of either result should also hold outside the weak decay regime, but they are far harder to establish there. This is closely related to the fact that no version of the Grimmett-Marstrand Theorem \cite{GrimmMarst90} is currently known that applies to long-range percolation with polynomial tails.
\paragraph{Transience.}
A connected loop-free multigraph $\mathcal{G}=(V(\mathcal{G}),E(\mathcal{G}))$ together with a \emph{conductance} function $C\colon E(\mathcal{G})\to (0,\infty)$ is called a \emph{network}. Note that we may always view $C$ as a function defined on $V(\mathcal{G})^{[2]}$ setting $C(xy)=0$ for potential edges $xy\notin E(G)$. The random walk $Y=(Y_i)_{i\geq 0}$ on $(\mathcal{G},C)$ is obtained by reweighing the transition probabilities of simple random walk on $\mathcal{G}$ according to $C$, i.e.\ the walker chooses their way with probabilities proportional the sum of the conductances on the edges incident to their current position. In particular, we obtain simple random walk 
on~$G$ as a special case, {if $C$ is constant}. We only consider locally finite networks, i.e.\ 
$$\pi(x):=\sum_{y\in V(\mathcal{G}): y \text{ incident to }x}C(xy)<\infty\quad \text{for all }x\in V(\mathcal{G}).$$ 
Note that $\pi$ is an invariant measure for $Y$. Let further $\mathsf{P}^\mathcal{G}(v\to Z)$ denote the probability that $Y$ visits $Z\subset V(\mathcal{G})$ before returning to $v\in V(\mathcal{G})$ when started in $Y_0=v\in V(\mathcal{G}).$ Now define the \emph{effective conductance} between $v\in V(\mathcal{G})$ and $Z\subset V(\mathcal{G})$ as
\[
\mho(v,Z)=\mho^\mathcal{G}(v,Z)=\pi(v)\mathsf{P}^\mathcal{G}(v\to Z),
\]
for finite $\mathcal{G}$ and then extend the notion to infinite graphs via a limiting procedure. In particular, by identifying all vertices at graph distance further than $n$ from $v\in V(\mathcal{G})$ with one vertex $z_n$ (whilst removing any loops and keeping multiple edges with their conductances) we obtain a sequence of finite networks $(\mathcal{G}_n,C_n)$. Moreover, the limit
\[
\mho^\mathcal{G}(v,\infty)=\lim_{n\to\infty}\mho^{\mathcal{G}_n}(v,z_n)\in[0,\infty)\quad\mbox{ for } v\in V(\mathcal{G}),
\]
is well-defined. We say that $Y$ (or $\mathcal{G}$ if $Y$ is simple random walk on $\mathcal{G}$) is transient if
	\[
	\mho(v,\infty)>0 \quad\text{ for some }v\in V(\mathcal{G}).
	\]
	Moreover, if $\mho(v,\infty)>0$ for some $v\in V(\mathcal{G})$, then $\mho(v,\infty)>0$ for all $v\in V(\mathcal{G})$.
\begin{theorem}[Transience of infinite cluster]\label{thm:transience}
Let $\varphi$ be such that $\bar\delta_{\mathsf{eff}}<2$ and let $\eta$ be either a Poisson process or an i.i.d.\ percolated version of $\Z^d$. Then, if an infinite component in $\G$ exits, it is almost surely transient.
\end{theorem}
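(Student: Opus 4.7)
The plan is to establish transience via Thomson's principle, by constructing a unit flow of finite energy from the origin to infinity inside $\C_\infty$ through a multi-scale renormalisation in the vein of Berger's transience proof for homogeneous long-range percolation, using Theorem~\ref{thm:sublinearclusters} and the weak FKG property in place of the edge-independence that Berger exploits. I would work under $\P_0$ conditioned on $\{0 \in \C_\infty\}$, fix $\lambda \in (\deff(0+)/2, 1)$ (possible because $\deff(0+) < 2$), and a geometric tower of boxes $B_n = \Gamma^{L^n}$ for some large $L$. Let $\mathcal{K}_n$ denote the connected component of $0$ in $\G[B_n] \cap \C_\infty$. By Theorem~\ref{thm:sublinearclusters} together with a Borel--Cantelli argument along these geometric scales and uniqueness of $\C_\infty$, I would have $|\mathcal{K}_n| \geq L^{n \lambda d}$ almost surely for all but finitely many $n$.

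The central step is to exhibit abundant bridges between $\mathcal{K}_n$ and $\mathcal{K}_{n+1}$. I would decompose $\varphi = \varphi^{(s)} + \varphi^{(\ell)}$, with $\varphi^{(s)}$ driving the short-range edges used to identify $\mathcal{K}_n$ (for instance those of length $\leq L^{n/2}$) and $\varphi^{(\ell)}$ driving the long edges that bridge consecutive scales; the two edge layers are independent because $1 - \e^{-\varphi} = 1 - \e^{-\varphi^{(s)}} \e^{-\varphi^{(\ell)}}$ factorises and the layers may be sampled from independent edge-mark families. The definition of $\deff(0+)$ then yields
\[
\E\bigl[M_n \,\bigm|\, \mathcal{K}_n, \mathcal{K}_{n+1}\bigr] \gtrsim |\mathcal{K}_n| \cdot |\mathcal{K}_{n+1}| \cdot L^{-nd\deff(0+)} \gtrsim L^{nd(2\lambda - \deff(0+))},
\]
where $M_n$ counts the $\varphi^{(\ell)}$-bridges from $\mathcal{K}_n$ into $\mathcal{K}_{n+1} \setminus B_n$; the exponent is strictly positive by the choice of $\lambda$. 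A conditional second-moment argument combined with weak FKG and the independence of $\eta$ (Poisson or i.i.d.\ percolated $\Z^d$) upgrades this expectation to a concentration bound $M_n \geq \tfrac12 \E M_n$ with failure probability summable in $n$.

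To conclude, I would short-circuit each $\mathcal{K}_n$ to a single super-vertex $v_n$, retaining all parallel bridges between the $v_n$. Rayleigh's monotonicity principle then bounds the effective resistance from $0$ to $\infty$ in $\C_\infty$ by that of the resulting one-dimensional chain $v_0 - v_1 - v_2 - \cdots$, which is
\[
\sum_n M_n^{-1} \lesssim \sum_n L^{-nd(2\lambda - \deff(0+))} < \infty,
\]
so that $\mho^{\C_\infty}(0, \infty) > 0$ almost surely on $\{0 \in \C_\infty\}$ and transience follows.

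The main obstacle will be the concentration step: in the i.i.d.\ edge setting underlying Berger's argument, inter-scale edges are independent of cluster shapes within each scale and a direct Chernoff bound suffices, whereas here $\mathcal{K}_n$ and the potential bridges depend on the same vertex marks. Splitting $\varphi$ into short- and long-range layers restores conditional independence between the clusters and the bridge count given the marks, but one still has to control the interaction between $M_n$ and the random geometry of $\mathcal{K}_n$; this is where both the stronger independence assumption on $\eta$ and the weak FKG inequality genuinely enter, and it explains why the theorem is not stated for general positively-correlated $\eta$.
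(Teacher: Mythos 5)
Your final step runs the wrong way. Short-circuiting a set of vertices into a single super-vertex is an instance of \emph{decreasing} edge resistances (setting some of them to zero), and Rayleigh's monotonicity therefore gives
\[
\mho^{\C_\infty}(0,\infty)^{-1} \;\geq\; \mho^{\text{chain}}(v_0,\infty)^{-1},
\]
i.e.\ the chain's effective resistance is a \emph{lower} bound on that of $\C_\infty$, not an upper bound. Showing the chain has finite resistance therefore tells you nothing about $\C_\infty$: the whole content of proving transience is precisely that the flow you send across the $M_n$ bridges must also be routed \emph{within} $\mathcal{K}_n$, through edges whose conductances contribute to the energy, and your construction discards that cost. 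This is not a minor technicality -- a graph may have $M_n$ growing arbitrarily fast and still be recurrent if the bridges all emanate from a bottleneck in $\mathcal{K}_n$. Similarly, the appeal to FKG in the ``concentration step'' points the wrong way: positive association inflates $\operatorname{Var}(M_n)$ rather than controlling it, so a bare second-moment bound does not close.

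The paper's proof avoids the flow construction altogether by invoking Berger's notion of a \emph{renormalised graph sequence} and the criterion \cite[Lemma 2.7]{Berge02}: it suffices to exhibit a hierarchy of mergers $(G_i)$ in which, at every level, the $\alpha_{i+1}$ pieces making up a level-$(i+1)$ vertex are \emph{mutually fully adjacent at two levels below} (the second bullet of Definition~\ref{def:grs}). That extra dense-adjacency requirement -- not just ``many bridges between consecutive scales'' -- is exactly what lets the flow be routed cheaply inside each cluster, and it is what the well-connectedness conditions $\mathsf{F}_1(n)$--$\mathsf{F}_3(n)$ and Lemma~\ref{lem:towerisgood} encode. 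Concretely, the paper defines good cubes recursively, bounds $\P(L_n^{\mathsf{c}})$ via Lemma~\ref{lem:regular} and Lemma~\ref{lem:preciseconnection} rather than via a second moment, and multiplies the $\P(L_n)$ using FKG on the increasing events $L_n$ to get $\P\bigl(\bigcap_n L_n\bigr)>0$, which combined with ergodicity gives almost-sure transience. If you want to pursue a flow-based argument you would have to reproduce that two-scale dense adjacency structure; replacing it with the chain $\sum_n M_n^{-1}$ is a genuine gap.
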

The proof of Theorem~\ref{thm:transience} is given in Section~\ref{sec:transience}. Theorem~\ref{thm:transience} is strictly stronger than the previous transience results in \cite{Berge02,HeydeHulshJorri17,GracaHeydeMoencMoert22} and in particular establishes the recurrence-transience transition conjectured for the two-dimensional \emph{soft Boolean model} in \cite{GracaHeydeMoencMoert22}. In $d\geq 3$ transience should of course also hold outside the weak decay regime, but this is difficult to establish for the same reason as the corresponding truncation result.


\section{Percolation in finite boxes}\label{sec:PercolationFB}
Throughout the following sections, we work repeatedly with the collections of half-open cubes
\[
\mathfrak{C}(m):=\{x+[-m/2,m/2)^d,\, x\in m\Z^d \},\quad m\in\N.
\]
We write $\Gamma^m(x)=x+[-m/2,m/2)^d$, $x\in m\Z^d$ for the cube of side length $m$ centred at $x\in m \Z^d$ and write $\Gamma^m$ for $\Gamma^m(0).$ For any bounded domain $\Lambda\subset \R^d$, we define the \emph{k-neighbourhood} of $\Lambda$ as $$\Delta_k\Lambda=\{x\in\R^d: \inf_{y\in\Lambda} |x-y|_{\infty}\leq k \}.$$
If $\cG=\big(\mathcal{V}(\cG),\mathcal{E}(\cG)\big)$ is any random geometric graph and $D\subset\R^d$ is some bounded domain, we write $\cG[D]$ for the subgraph of $\cG$ induced by vertices located in $D$. 

To prove Theorem~\ref{thm:sublinearclusters}, we first establish some auxiliary results and develop an improved version of the renormalisation approach used in \cite{Berge02} to study homogeneous long-range percolation. Let us begin by setting up some notation. We say that a finite collection $M$ of numbers in $(0,1)$ is \emph{$\mu$-regular}, for $\mu\in(0,1/2)$, if for all $i\in I(\mu,M):= \{1,\dots,\lfloor |M|^{1-\mu} \rfloor\}$ it holds that 
\[
N_i(M):=\sum_{S\in M}\1\left\{S\leq \tfrac i{|I(\mu,M)|}\right\}\geq \frac{|M|}{2}\,\frac{i}{|I(\mu,M)|}.
\]
A vertex set $V\subset\eta'$ is called \emph{$(\mu,k)$-regular}, if there exist $(x_1,s_1),\dots,(x_k,s_k)\in V$, such that $\{s_1,\dots,s_k\}$ is $\mu$-regular. 
\begin{lemma}\label{lem:regular}
Fix $\mu\in(0,1/2)$. Any finite collection $M$ of i.i.d.\ Uniform$(0,1)$ random variables is $\mu$-regular with probability exceeding
\[
1-|M|^{1-\mu}\e^{-|M|^\mu/8}.
\]
\end{lemma}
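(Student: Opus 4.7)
The plan is to reduce $\mu$-regularity to a collection of lower-tail concentration bounds on binomial random variables and then union-bound. Write $n=|M|$ and $k=|I(\mu,M)|=\lfloor n^{1-\mu}\rfloor$. For each $i\in\{1,\dots,k\}$, the random variable $N_i(M)=\sum_{S\in M}\1\{S\le i/k\}$ is a sum of $n$ i.i.d.\ Bernoulli$(i/k)$ indicators, hence binomial with mean $m_i:=ni/k$. The event that $M$ fails to be $\mu$-regular is precisely the union over $i\in\{1,\dots,k\}$ of the events $\{N_i(M)<m_i/2\}$.

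Next, I would invoke the standard multiplicative Chernoff bound in the form
\[
\P\bigl(\mathrm{Bin}(n,p)\le (1-\delta)np\bigr)\le \exp(-\delta^2 np/2),\qquad \delta\in(0,1),
\]
with $\delta=1/2$ and $p=i/k$. This yields
\[
\P\bigl(N_i(M)<m_i/2\bigr)\le \exp(-m_i/8)=\exp\!\bigl(-ni/(8k)\bigr).
\]
Since $k\le n^{1-\mu}$, for every $i\ge 1$ one has $ni/(8k)\ge n^{\mu}/8$, and therefore each of the $k\le n^{1-\mu}$ bad events has probability at most $\exp(-n^\mu/8)$.

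A union bound over $i\in I(\mu,M)$ then gives
\[
\P(M\text{ is not }\mu\text{-regular})\le k\,\exp(-n^\mu/8)\le |M|^{1-\mu}\exp(-|M|^\mu/8),
\]
which is exactly the claimed estimate. There is no real obstacle here: the only point worth watching is that the floor in the definition of $k$ does not spoil the inequality $ni/(8k)\ge n^\mu/8$, but since $k\le n^{1-\mu}$ trivially, this is immediate. The argument is self-contained and does not rely on any of the structural assumptions on $\eta$ or $\varphi$, which is consistent with the purely combinatorial nature of the statement.
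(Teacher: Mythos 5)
Your argument is correct and is essentially the same as the paper's: identify each $N_i(M)$ as a $\mathrm{Bin}(n,i/k)$ variable with mean $ni/k$, apply a lower-tail concentration bound (the paper invokes Bernstein's inequality, you invoke the multiplicative Chernoff bound, but both give the factor $\exp(-\E N_i/8)$), then union bound over $i$ and use $k\le n^{1-\mu}$ together with $i\ge 1$ to reach $n^{1-\mu}\e^{-n^\mu/8}$. The only cosmetic difference is the name of the concentration inequality used; both yield the same per-$i$ estimate and the same final bound.
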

\begin{proof} Let $n=|M|$. We have
\[
\E [N_i(M)] =\frac{n i}{|I(\mu,M)|}, \quad i\in I(\mu,M), 
\]
and, by Bernstein's inequality,
\begin{align*}
	\P(\exists i :\; N_i(M)<ni/(2|I(\mu,M)|)) & \leq \sum_{i=1}^{|I(\mu,M)|}\P(N_i(M)<\E N_i(M)/2) \\
	& \leq \sum_{i=1}^{|I(\mu,M)|}\e^{-\frac{(n i/|I(\mu,M)|)^2}{8n i/|I(\mu,M)| }}= \sum_{i=1}^{|I(\mu,M)|}\e^{-\frac{i n}{8|I(\mu,M)|}}.
\end{align*}
By definition of $I(\mu,M)$, we have $|I(\mu,M)|\leq n^{1-\mu}$ and the claimed bound follows.
\end{proof}
The purpose of $\mu$-regularity is to obtain lower bounds on connection probabilities for large vertex sets that depend solely on their size and distance of each other. 
\begin{lemma}\label{lem:preciseconnection}
	There exist a constant $C=C(g,\rho)>0$ and for any $\mu\in(0,1/2)$ some $v^\ast(\mu)< \infty$ such that for all $v\geq v^\ast(\mu)$ and any disjoint pair $V_1,V_2\subset\eta'$ of $(\mu,v)$-regular vertex sets that satisfy $\textup{diam}(\{x:(x,s)\in V_1\cup V_2\})\leq D$, we have the uniform deterministic bound
	\[
	\P(V_1 {\leftrightarrow} V_2|\eta')\geq1-\exp\left( - C\, v^2 \int_{[v^{-(1-\mu)},1-v^{-(1-\mu)}]^2}\varphi(s,t,D)\, \textup{d}(s,t)\right ).\]
\end{lemma}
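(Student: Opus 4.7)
My plan is to bound the non-connection probability by the probability that no \emph{direct} edge runs between $V_1$ and $V_2$, and then turn the resulting sum of connection intensities into the claimed integral via Abel summation together with the $\mu$-regularity.

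Conditionally on $\eta'$, the edge marks $\{V_{\x\y}: \x\y\in(\eta')^{[2]}\}$ are independent, so the direct-edge events $\{V_{\x\y}\leq 1-\e^{-\varphi(s_x,s_y,|x-y|)}\}$ are independent. For disjoint $V_1,V_2$ this yields
\[
\P(V_1\leftrightarrow V_2 \mid \eta') \;\geq\; 1-\exp\left(-\sum_{\x\in V_1,\y\in V_2}\varphi(s_x,s_y,|x-y|)\right).
\]
Using the diameter hypothesis $|x-y|\leq D$ and that $\varphi$ is non-increasing in its third argument, I may replace $|x-y|$ by $D$, and restrict the inner sum to the size-$v$ subsets $V_i'\subseteq V_i$ whose marks form the $\mu$-regular collection. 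It then suffices to prove the deterministic bound
\[
\sum_{\x\in V_1',\,\y\in V_2'}\varphi(s_x,s_y,D)\;\geq\; C\,v^2\int_{[v^{-(1-\mu)},1-v^{-(1-\mu)}]^2}\varphi(s,t,D)\di(s,t).
\]

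For this, I fix $t\in(0,1)$ and write $f(s)=\varphi(s,t,D)$, which is non-increasing. Set $K=\lfloor v^{1-\mu}\rfloor$; $\mu$-regularity states $N_i\geq vi/(2K)$ for $i=1,\dots,K$, where $N_i$ counts the marks of $V_1'$ at most $i/K$. Abel summation on the partition of $(0,1]$ into $K$ cells of width $1/K$, combined with the telescoping identity $\sum_{i=1}^{K-1} i[f(i/K)-f((i+1)/K)]=\sum_{i=1}^{K-1}f(i/K)-(K-1)f(1)$, turns the cumulative lower bound on $N_i$ into
\[
\sum_{\x\in V_1'}f(s_x)\;\geq\;\frac{v}{2K}\sum_{i=1}^{K}f(i/K)\;\geq\;\frac{v}{2}\int_{1/K}^{1}f(s)\di s,
\]
the second inequality being the usual Riemann comparison for non-increasing $f$. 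Applying the identical bound in the variable $t$ to the remaining sum $\sum_{\y\in V_2'}\varphi(s,s_y,D)$ and invoking Fubini produces
\[
\sum_{\x\in V_1',\,\y\in V_2'}\varphi(s_x,s_y,D)\;\geq\;\frac{v^2}{4}\int_{1/K}^{1}\int_{1/K}^{1}\varphi(s,t,D)\di s\di t.
\]

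The main obstacle is reconciling the integration domain $[1/K,1]^2$ produced by the Abel argument with the target domain $[v^{-(1-\mu)},1-v^{-(1-\mu)}]^2$. The upper cutoff is free, since $\int_{1-\epsilon}^{1}\varphi\geq 0$ and $\varphi$ is smallest near mark $1$. The lower cutoff is delicate: $1/K$ and $v^{-(1-\mu)}$ differ only by a bounded factor, but because $\varphi$ may blow up as a mark tends to $0$, the sliver $(v^{-(1-\mu)},1/K)$ can carry non-negligible integral mass. I would handle this by refining the partition to a grid of width strictly smaller than $v^{-(1-\mu)}$ (so that the Abel argument produces an integral starting at a cutoff $\leq v^{-(1-\mu)}$), exploiting the structural form $\varphi(s,t,r)=\rho(g(s,t)r^d)$ to compare the integral on the small and large grids up to a factor depending only on $g$ and $\rho$. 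The resulting multiplicative losses are absorbed into $C=C(g,\rho)$ and the threshold $v^\ast(\mu)$ is chosen so large that $K\geq 2$ and the Riemann comparison kicks in uniformly, which delivers the claim.
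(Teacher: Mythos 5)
Your argument follows essentially the same route as the paper's: bound the non-connection probability by the probability of no direct $V_1$--$V_2$ edge, replace $|x-y|$ by $D$ via monotonicity of $\varphi$ in its third argument, and then convert $\sum_{\x,\y}\varphi(s_x,s_y,D)$ into an integral using $\mu$-regularity. The only stylistic difference is that the paper encodes the regularity bound via the empirical distribution functions $F_i$ of the marks, obtaining $F_i(t)\geq\tfrac12(t-1/K)$ with $K=\lfloor v^{1-\mu}\rfloor$ and then invoking the stochastic-domination inequality $\int h\,\di F_i\geq\int h\,\di G$ for non-increasing $h$, whereas you run the equivalent Abel-summation/Riemann-sum computation directly. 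Both formalisms give $\sum_{\x\in V_1'}f(s_x)\geq\tfrac v2\int_{1/K}^1 f$, so the content is the same.

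Where your proposal goes wrong is in the treatment of the cutoff mismatch $1/K$ vs.\ $v^{-(1-\mu)}$. You correctly observe that $1/K=1/\lfloor v^{1-\mu}\rfloor>v^{-(1-\mu)}$, so the Abel argument produces an integral over $[1/K,1]$, slightly inside the target domain. But your proposed fix -- ``refining the partition to a grid of width strictly smaller than $v^{-(1-\mu)}$'' -- is not available: $\mu$-regularity is defined with respect to the \emph{fixed} partition of $(0,1]$ into $K=\lfloor v^{1-\mu}\rfloor$ cells, and it provides no information at finer scales, so the Abel argument cannot start from a cutoff below $1/K$. Your back-up idea of invoking the factorised form $\varphi=\rho(g(\cdot,\cdot)\,r^d)$ also over-reaches: the lemma is stated for general non-increasing $\varphi$, and no uniform comparison between the sliver integral $\int_{v^{-(1-\mu)}}^{1/K}\varphi$ and $\int_{1/K}^1\varphi$ holds for an arbitrary such $\varphi$. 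The resolution the paper uses (implicitly, by loosening $\tfrac12(t-1/K)$ to $\tfrac13(t-v^{-(1-\mu)})$ and absorbing the rest into $C$ and $v^\ast(\mu)$) is simply that $1/K\leq 2\,v^{-(1-\mu)}$ once $v^{1-\mu}\geq 2$, so the integral you obtain is, up to a constant change in the scale parameter, the one appearing in the statement; the constant shift is harmless in every application since $\deff$ is defined by a limit. You should state this explicitly rather than reach for a finer grid or extra structure on $\varphi$.
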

\begin{remark}
$\mu$-regularity of a vertex set is solely a property of the i.i.d.\ vertex marks and, conditionally on $\eta'$, the event $V_1 {\leftrightarrow} V_2$ is measurable with respect to the edge marks of edges joining $V_1$ and $V_2$ only. Hence, Lemma~\ref{lem:regular} yields a large deviation bound for untypical behaviour of vertex marks and Lemma~\ref{lem:preciseconnection} is essentially a large deviation bound for the i.i.d.\ sequence of edge marks, given that the vertex marks involved show typical behaviour.
\end{remark}
\begin{proof}[{Proof of Lemma~\ref{lem:preciseconnection}}]
	Let $V_i, i=1,2$ denote vertex sets of size $v$ such that all locations of vertices in $V_1\cup V_2$ are within distance $D$ of each other and such that $V_1$ and $V_2$ have $\mu$-regular marks. Let further $F_i$ be the empirical distribution function of the vertex marks corresponding to $V_i$, $i=1,2$, set $n=|V_1|$, $r=|I(\mu,V_1)|$ and denote by $M_1=\{s_1,\dots,s_n\}$ the vertex marks corresponding to $V_1$. We have, for $t\in[0,1]$,
\begin{align*}
	n F_1(t) & = \sum_{j=1}^r \sum_{i=1}^n \1\{ s_i\leq t \}\1\left\{ t \in \left(\tfrac{j-1}{r},\tfrac{j}{r} \right]\right\}
	\geq \sum_{j=1}^r N_{j-1}(M_1)\1\left\{ t \in \left(\tfrac{j-1}{r},\tfrac{j}{r} \right]\right\}\\
	& = \sum_{k=1}^r  (N_k(M_1)-N_{k-1}(M_1))\sum_{j=k+1}^r \1\left\{ t \in \left(\tfrac{j-1}{r},\tfrac{j}{r} \right]\right\} = N_{\lfloor tr \rfloor}(M_1).
\end{align*}
Since $M_1$ is $\mu$-regular, this implies that
\[
nF_1(t)\geq \frac{n}{2}\frac{\lfloor tr \rfloor}{r}\geq \frac{n}{2}\frac{tr -1}{r}=\frac{n}{2}(t-1/r).
\]
A similar argument holds for $F_2$ and it follows that, for $|V_1|, |V_2|$ sufficiently large,
\begin{equation}\label{eq:distestimate}
	F_1(t)\geq \frac{1}{3}(t-|V_1|^{-(1-\mu)}) \text{ and } F_2(t)\geq \frac{1}{3}(t-|V_2|^{-(1-\mu)}). 
\end{equation}
Now note that $\x_i=(x_i,t_i)\in V_1$ and $\x_j=(x_j,t_j)\in V_2$ are always connected if their corresponding edge mark satisfies
\begin{equation}\label{eq:aux}
	V_{\x_i\x_j}\leq 1-\e^{-\varphi(t_i,t_j,D)},
\end{equation} which can be evaluated independently of the exact spatial positions. Since the edge mark collection $\{V_{\x_i\x_j}; \x_i,\x_j\}$ is i.i.d.\ and independent of $\eta'$ we have for the number $\Sigma$ of edges between $V_1,V_2$
\[
\P(\Sigma=0|\eta')\leq \prod_{{(x_i,t_i)\in V_1, }\atop{(x_j,t_j)\in V_2}}\e^{-\varphi(t_i,t_j,D)}=\exp\Big(-\sum_{{(x_i,t_i)\in V_1, }\atop{(x_j,t_j)\in V_2}}\varphi(t_i,t_j,D)\Big).
\]
It now follows from \eqref{eq:distestimate} and the non-negativity of distribution functions that for $h(s,t)=\varphi(s,t,D), (s,t)\in(0,1)^2$, \begin{align*}
	\sum_{{(x_i,s_i)\in V_1,}\atop{ (x_j,s_j)\in V_2}} h(s_i,s_j) & =|V_1||V_2|\int_0^1\int_0^1 h(s,t)\, \textup{d} F_1(s)\textup{d} F_2(t) \\ & \geq  \frac{|V_1||V_2|}{9}\int_{V_1^{-(1-\mu)}}^{1-|V_1|^{-(1-\mu)}}\int_{|V_2|^{-(1-\mu)}}^{1-|V_2|^{-(1-\mu)}} h(s,t)\, \textup{d}s
	\textup{d}t,
\end{align*}
and the assertion of the lemma follows, because the estimate is uniform in the configuration $\eta'$ on the event that $V_1,V_2$ are $\mu$-regular.
\end{proof}
The renormalisation scheme we use to prove Theorem~\ref{thm:sublinearclusters} requires a number of interdependent parameters, which we now introduce. We first choose a sequence of density parameters $(\varrho_n)$ such that $\varrho_n<1/4$ for all $n\in\N$ and
\begin{equation}\label{eq:varrhodef}
	\lim_{n\to\infty}\varrho_n n^2\in(1,\infty).
\end{equation}
In fact, the precise polynomial decay of $\varrho_n$ is not important as long as 
\begin{equation*}
	\sum_{n=1}^\infty \varrho_n <\infty.
\end{equation*}
Now fix $\mu^\ast=\mu^\ast(\varphi)\in(0,1/2)$ such that
\begin{equation}\label{eq:mustardef}
	\deff(\mu^\ast)<2,
\end{equation}
which is possible due to our assumption on $\deff(0+)$. Now choose $\nu=\nu(\varphi)$ satisfying
\begin{equation}\label{eq:nudef}
	1<\nu< \begin{cases}\frac{1}{1-\mu^\ast}, & \text{ if }\deff(\mu^\ast)\leq 0,\\
		\frac{1}{1-\mu^\ast} \wedge \frac{2}{\deff(\mu^\ast)}, & \text{ if }\deff(\mu^\ast)\in(0,2),
	\end{cases}
\end{equation}
and let
\begin{equation}\label{eq:mudef}
	\mu=1-\nu(1-\mu^\ast)\in(0,\mu^\ast).
\end{equation}
Finally, we choose $(\sigma_n)$ such that $\sigma_n\in 2\N+1$ for all $n$ and such that for all but finitely many $n$,
\begin{equation*}
	n^{\omega} \leq \sigma_n \leq (1+n^{-2})^{1/d} n^{\omega},
\end{equation*}
where $\omega=\omega(\varphi)$ satisfies
\begin{equation}\label{eq:omegadef1}
	\omega>\frac{2 \nu}{d(\nu-1)}>\frac{2}{d}.
\end{equation}
For the remainder of the section, one should think of the sequences $(\varrho_n),(\sigma_n)$ and the numbers $\nu,\mu$ and $\omega$ as having been fixed. Assume further, that two large integers $k\in\N,\ell\in 2\N$ are given -- we are going to specify these parameters below dependent on the density of the infinite cluster $\theta$ and the auxiliary variable $\lambda$ appearing in the formulation of Theorem~\ref{thm:sublinearclusters}. Define a sequence $(m_n)=(m_n(\ell))$ of lengths via
\[
m_n=\ell \prod_{i=1}^n \sigma_i,\quad n\in\{0,1,2,\dots\}.
\]
To lighten notation, we write $\Gamma_n(x)=\Gamma^{m_n}(x),x\in m_n\Z^d,$ for the \emph{stage-}$n$ cube at $x$, i.e.\ the cube in $\mathfrak{C}(m_n)$ with midpoint $x$. Note that for any $n\in\N$, each stage-$n$ cube can be decomposed into precisely $\sigma_n^d$ stage-$n-1$ cubes, which we call its \emph{subcubes}. 
The \emph{preclusters} of a cube $\Gamma_n(x)$ are maximal subsets (with respect to inclusion) of $\eta'\cap \Gamma_n(x)\times(0,1)$ which are contained in the same connected component of $\G[\Delta_k\Gamma_n(x)]$ (note that their definition depends on $k$).

\medskip

A stage-$0$ cube is a cube $\Gamma_0(x)\in\mathfrak{C}(\ell)$ and said to be \emph{alive} if it contains a precluster
with at least $\lceil\ell^d\theta/2\rceil$ vertices. Similarly, the cardinality thresholds
\[
v_n=\frac{\theta}{2} m_{n}^d \prod_{i=1}^{n}\varrho_i, \quad n\in\{0,1,2,\dots\},
\]
act as lower bounds for the number of vertices in the preclusters at further stages, but the condition for aliveness becomes a little more complex. For $n\geq1$, a stage-$n$ cube $\Gamma_n(x)$ is alive, if
\begin{itemize}
	\item[\textsf{A}($n$)] at least $r_n:=\lceil\varrho_n\sigma_n^d\rceil$ of its subcubes are alive;
	\item[\textsf{B}($n$)] at least $r_n$ of its living subcubes contain a $(\mu,v_{n-1})$-regular precluster;
	\item[\textsf{C}($n$)] there are $(\mu,v_{n-1})$-regular preclusters $\mathcal{C}_1,\dots,\mathcal{C}_{r_n}$, each associated with a different subcube, which are are all mutually adjacent in $\G$.
\end{itemize}
There is some redundancy in defining aliveness via the properties $\textsf{A}(n),\textsf{B}(n)$ and $\textsf{C}(n)$, but this formulation makes it straightforward to relate the definition to probabilities. Note, that the construction ensures that a living stage-$n$ subcube always contains a precluster of size at least $v_n$. Furthermore, the events $$\{\Gamma_n(x) \text{ is alive}\}, \quad n\in\N\cup\{0\},x\in m_n\Z^d,$$ are increasing. The main tool needed to prove Theorem~\ref{thm:sublinearclusters} is the following lemma.
\begin{lemma}\label{lem:alivebound}
There exists some constant $0<C<\infty$ depending only on $\varphi$ and $\ell$ such that
\begin{equation}\label{eq:mainprobbound}
\P(\Gamma_n\text{ is not alive})\leq C \P(\Gamma_0\text{ is not alive}), \quad \text{for all }n\in\N.
\end{equation}	
\end{lemma}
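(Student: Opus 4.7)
My plan is induction on $n$, with the case $n=0$ holding trivially (take $C=1$). For the inductive step I decompose
\[\P(\Gamma_n\text{ not alive}) \leq \P(\neg\textsf{A}(n)) + \P(\textsf{A}(n),\neg\textsf{B}(n)) + \P(\textsf{A}(n),\textsf{B}(n),\neg\textsf{C}(n))\]
and estimate each term separately. For $\P(\neg\textsf{A}(n))$ no independence is required. The number $D$ of dead stage-$(n-1)$ subcubes of $\Gamma_n$ satisfies $\E D = \sigma_n^d\,\P(\Gamma_{n-1}\text{ not alive})$ by stationarity, so Markov's inequality yields
\[\P(\neg\textsf{A}(n)) = \P\bigl(D\geq \sigma_n^d - r_n + 1\bigr) \leq \frac{\sigma_n^d}{\sigma_n^d - r_n + 1}\,\P(\Gamma_{n-1}\text{ not alive}) \leq (1+c\varrho_n)\,\P(\Gamma_{n-1}\text{ not alive}),\]
using $r_n\leq \varrho_n\sigma_n^d+1$ and $\varrho_n<1/4$.

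For $\P(\textsf{A}(n),\neg\textsf{B}(n))$, note that on $\textsf{A}(n)$ each of at least $r_n$ subcubes contains a precluster of size at least $v_{n-1}$. Lemma~\ref{lem:regular} gives that an i.i.d.\ $\operatorname{Uniform}(0,1)$ collection of $v_{n-1}$ marks is $\mu$-regular with probability at least $1-v_{n-1}^{1-\mu}\exp(-v_{n-1}^\mu/8)$. The marks inside the random precluster are not exactly i.i.d., but both events ``precluster is large'' and ``a chosen $v_{n-1}$-subset of marks is $\mu$-regular'' are decreasing in the vertex marks (smaller marks both facilitate connections and increase every counting function $N_i$ in the definition of $\mu$-regularity), so Harris--FKG applied conditionally on the point configuration and edge marks shows that the conditional regularity probability dominates the unconditional one. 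A union bound over at most $\sigma_n^d$ subcubes then yields
\[\P(\textsf{A}(n),\neg\textsf{B}(n)) \leq \sigma_n^d\, v_{n-1}^{1-\mu}\exp(-v_{n-1}^\mu/8).\]

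For $\P(\textsf{A}(n),\textsf{B}(n),\neg\textsf{C}(n))$, on $\textsf{A}(n)\cap\textsf{B}(n)$ there are $r_n$ pairwise disjoint $(\mu,v_{n-1})$-regular preclusters lying in distinct subcubes of $\Gamma_n$, at mutual Euclidean distance at most $\sqrt{d}\,m_n$. Lemma~\ref{lem:preciseconnection} applied to each of the $\binom{r_n}{2}$ pairs provides a uniform lower bound on the probability of a direct connecting edge, and, crucially, the events that distinct pairs connect depend on pairwise-disjoint sets of edge marks and are therefore independent, so
\[\P(\textsf{A}(n),\textsf{B}(n),\neg\textsf{C}(n)) \leq \binom{r_n}{2}\exp\bigl(-C\, v_{n-1}^2\, I_n\bigr),\]
where $I_n$ is the double integral in Lemma~\ref{lem:preciseconnection} at distance $\sqrt{d}\,m_n$. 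Using $v_{n-1}\asymp m_{n-1}^d$ together with $1-\mu=\nu(1-\mu^\ast)$ and the relation between $\sigma_n$ and $m_{n-1}$ coming from \eqref{eq:omegadef1}, one checks that for large $n$ the integration domain $[v_{n-1}^{-(1-\mu)},1-v_{n-1}^{-(1-\mu)}]^2$ contains the domain $[m_n^{-d(1-\mu^\ast)},1-m_n^{-d(1-\mu^\ast)}]^2$ appearing in the definition of $\deff(\mu^\ast)$, forcing $I_n\gtrsim m_n^{-d(\deff(\mu^\ast)+o(1))}$ and hence $v_{n-1}^2 I_n\gtrsim m_{n-1}^{d(2-\deff(\mu^\ast))-o(1)}$, with $2-\deff(\mu^\ast)>0$ by \eqref{eq:mustardef}. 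This contribution therefore decays super-polynomially in $m_{n-1}$.

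Combining, I obtain the recursion $\P(\Gamma_n\text{ not alive})\leq (1+c\varrho_n)\,\P(\Gamma_{n-1}\text{ not alive})+\epsilon_n$ with $\sum_n\varrho_n<\infty$ by \eqref{eq:varrhodef} and $\sum_n\epsilon_n<\infty$ by the super-polynomial decay above. Iterating, and using that $\P(\Gamma_0\text{ not alive})$ is strictly positive in the regime where the lemma is applied (and bounded below by a function of $\ell$ alone), gives \eqref{eq:mainprobbound} with a constant $C=C(\varphi,\ell)<\infty$ given by $C=\prod_{k\geq 1}(1+c\varrho_k)+\P(\Gamma_0\text{ not alive})^{-1}\sum_{k\geq 1}\epsilon_k$.

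I expect the most delicate step to be the bound on $\P(\textsf{A}(n),\neg\textsf{B}(n))$: making the FKG/monotonicity argument rigorous requires care because the precluster is itself a function of the marks, so one cannot simply condition on a fixed vertex set. A clean workaround is to select, inside each subcube and as a function of locations only, a reference set of $v_{n-1}$ candidate vertices whose marks are genuinely i.i.d.\ $\operatorname{Uniform}(0,1)$ and then invoke Lemma~\ref{lem:regular}, combining the resulting mark-regularity with the separate event that these vertices lie in the large precluster. The parameter verification underlying the third bound, while technically demanding, reduces to unpacking \eqref{eq:mustardef}--\eqref{eq:omegadef1}.
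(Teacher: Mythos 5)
Your overall architecture matches the paper's: decompose failure of aliveness into failure of $\textsf{A}(n)$, of $\textsf{B}(n)$ given $\textsf{A}(n)$, and of $\textsf{C}(n)$ given $\textsf{A}(n)\cap\textsf{B}(n)$; bound the first by Markov's inequality against $a_{n-1}$; bound the other two by Lemmas~\ref{lem:regular} and \ref{lem:preciseconnection}; and iterate the resulting recursion using summability of $\varrho_n$ and super-polynomial decay of the error terms. Your parameter arithmetic in the third term (comparing the integration domain against the one in the definition of $\deff(\mu^\ast)$ via \eqref{eq:auxbound3}) is also the right calculation, modulo a harmless $\sqrt{d}$ and a slightly off exponent that both vanish into the $o(1)$.

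The genuine gap is in the third bound, on $\P(\textsf{A}(n),\textsf{B}(n),\neg\textsf{C}(n))$. You write that Lemma~\ref{lem:preciseconnection} ``applied to each of the $\binom{r_n}{2}$ pairs provides a uniform lower bound on the probability of a direct connecting edge, and, crucially, the events that distinct pairs connect depend on pairwise-disjoint sets of edge marks and are therefore independent.'' This is not the obstruction, and the independence claim (which a union bound does not need anyway) is a red herring. The real difficulty is that the preclusters $\mathcal{R}^{n-1,k}(x_n(i))$ are themselves functions of edge marks inside $\Delta_k\Gamma_n[i]$, and for adjacent subcubes the $k$-neighbourhoods overlap, so the edge marks that determine the preclusters are \emph{not} disjoint from the edge marks one wants to use to connect them. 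Conditioning on $\textsf{A}(n)\cap\textsf{B}(n)$, which pins down the identity of the maximal regular preclusters, therefore tilts the joint law of the connecting edge marks, and Lemma~\ref{lem:preciseconnection} (whose estimate is stated conditionally on $\eta'$ with the edge marks i.i.d.) cannot be applied directly. This is exactly the same issue you flag for the $\textsf{B}(n)$ bound, and your proposed workaround — tag a reference set of locations as a function of $\eta$ alone, so its marks stay genuinely i.i.d., and then couple the tagged sample with the preclusters via increasing events and FKG — is what the paper actually does. But you only deploy it for the $\textsf{B}(n)$ bound; the $\textsf{C}(n)$ bound requires the same device, and there it is more delicate because one must simultaneously control vertex marks \emph{and} edge marks: the event ``tagged vertices lie in the regular precluster'' has to be shown increasing in both, and the ``strong connection'' event is then the FKG partner measurable in the disjoint edge-mark block. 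Without this, your third estimate is not justified, and since that estimate is the heart of the lemma (it is where $\deff(\mu^\ast)<2$ enters), the proof is incomplete as written.
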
	
To establish Lemma~\ref{lem:alivebound}, we proceed in several steps. For $n\in\N\cup\{0\}$, we denote by $\Gamma_n\in\mathfrak{C}(m_n)$ the stage-$n$ cube centred at the origin. We now define the events
\begin{itemize}
	\item $A_n=\{\Gamma_n \text{ is alive}\}, n\in\N_0$,
	\item $A'_n=\{\Gamma_n \text{ has at least }r_n \text{ living subcubes} \}, n\geq 1,$
	\item $B_n=\{\Gamma_n \text{ satisfies condition \textsf{B}($n$)}\}, n\geq 1,$
	\item $C_n=\{\Gamma_n \text{ satisfies condition \textsf{C}($n$)}\}, n\geq 1,$
\end{itemize}
and aim to give lower bounds for their probabilities.
Our first result is a straightforward recursive bound for $\P(A'_n)$ in terms of $\P(A_n)$.
\begin{lemma}\label{lem:a_nbound}
Set $$a_n:=\P((A_n)^\mathsf{c}),\quad n\in\N\cup\{0\}.$$ Then
\[
\P((A'_n)^\mathsf{c})\leq \frac{a_{n-1}}{1-\varrho_n},\quad n\in\N.
\]
\end{lemma}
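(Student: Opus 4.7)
The statement is a first moment bound, so the plan is to apply Markov's inequality to the number of \emph{non}-living subcubes of $\Gamma_n$, using only stationarity (not independence) of the underlying point process.

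First I would observe that $\Gamma_n$ is exactly the disjoint union of $\sigma_n^d$ stage-$(n-1)$ cubes, say $\Gamma_{n-1}(x_1),\dots,\Gamma_{n-1}(x_{\sigma_n^d})$ with $x_i\in m_{n-1}\Z^d$. Aliveness of each $\Gamma_{n-1}(x_i)$ is a measurable functional of the marked point process $\xi$ restricted to $\Delta_k\Gamma_{n-1}(x_i)\times(0,1)$ together with the edge marks attached to pairs of points therein. Because $\xi$ is stationary under shifts of $\Lambda$ and the edge/vertex marks are i.i.d., each subcube is alive with the same probability $1-a_{n-1}$ as $\Gamma_{n-1}=\Gamma_{n-1}(0)$. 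The $k$-neighbourhoods of different subcubes overlap, so the aliveness events are generally not independent, but this is irrelevant for a first moment argument.

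Let $N$ denote the number of living subcubes of $\Gamma_n$. By linearity of expectation,
\[
\E[\sigma_n^d-N]=\sum_{i=1}^{\sigma_n^d}\P(\Gamma_{n-1}(x_i)\text{ not alive})=\sigma_n^d a_{n-1}.
\]
Since $(A'_n)^\mathsf{c}=\{N<r_n\}=\{\sigma_n^d-N\geq \sigma_n^d-r_n+1\}$, Markov's inequality gives
\[
\P((A'_n)^\mathsf{c})\leq \frac{\sigma_n^d a_{n-1}}{\sigma_n^d-r_n+1}.
\]
Finally, from $r_n=\lceil \varrho_n\sigma_n^d\rceil\leq \varrho_n\sigma_n^d+1$ we obtain $\sigma_n^d-r_n+1\geq \sigma_n^d(1-\varrho_n)$, which gives the claimed bound $a_{n-1}/(1-\varrho_n)$.

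The only point that requires any care is the verification that stationarity of $\eta$ and the independence of the marks actually transport the probability of aliveness of an arbitrary subcube back to that of $\Gamma_{n-1}$; this is immediate once one unpacks that the definition of aliveness refers only to the restriction of $\xi$ to a $\Lambda$-shift of $\Delta_k\Gamma_{n-1}$, and that the conditions \textsf{A}$(n-1)$, \textsf{B}$(n-1)$, \textsf{C}$(n-1)$ (unrolled recursively down to stage $0$) are shift-covariant. No genuine obstacle arises, since the lemma deliberately avoids invoking FKG or any concentration at this stage; the positive correlation and regularity machinery will only be needed to control $\P(B_n)$ and $\P(C_n)$ in the subsequent lemmas.
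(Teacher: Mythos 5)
Your proof is correct and takes essentially the same route as the paper: both apply Markov's inequality to the number of non-alive subcubes, using translation invariance to identify the per-subcube failure probability with $a_{n-1}$, and then massage $r_n=\lceil\varrho_n\sigma_n^d\rceil$ to obtain the factor $1/(1-\varrho_n)$. The paper compresses this into a single displayed line, but the content is identical to yours.
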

\begin{proof}
 By translation invariance and Markov's inequality,
\begin{equation*}
\P((A'_n)^\mathsf{c})=\P(\# \{\text{non-alive subcubes of }\Gamma_n\}>(1-\varrho_n)\sigma_n^d) \leq \frac{\P(A_{n-1}^\mathsf{c})}{1-\varrho_n}, \quad n\geq 1.
\end{equation*}
\end{proof}
To obtain further bounds involving the events $B_n$ and $C_n$, we define the \emph{maximal precluster} $\mathcal{C}^{n,k}(x)\subset \Gamma_n(x)\times(0,1)$ of a stage-$n$ cube $\Gamma_n(x)$ to be its precluster of largest cardinality (note that $\mathcal{C}^{n,k}(x)$ may be empty if $\eta\cap\Gamma_n(x)$ is empty), unless there is a tie between several preclusters, in which case the maximal precluster is the (almost surely unique) one amongst them containing the vertex with the smallest mark. This definition only works for almost every configuration $\xi$, we thus set $\mathcal{C}^{n,k}(x)=\emptyset$ on the set of configurations $\xi$ on which there are at least two preclusters of maximal size with the same minimal mark to obtain a well-defined precluster in any case. Analogously, we define $\mathcal{R}^{n,k}(x)\subset \Gamma_n(x)\times(0,1)$ to be the \emph{maximal $(\mu,v_n)$-regular precluster} associated with a stage-$n$ cube $\Gamma_n(x)$.
\begin{remark}
	Note that once $\mathcal{C}^{n,k}(x)$ is non-empty for some configuration $\xi$, it remains non-empty if any vertex mark of a vertex in $\mathcal{C}^{n,k}(x)$ is decreased or if any edge mark of an edge adjacent to $\mathcal{C}^{n,k}(x)$ is decreased. The same is true for $\mathcal{R}^{n,k}(x)$, respectively. This fact is needed to obtain a monotonicity property of the events $E(\cdot),F(\cdot,\cdot)$ defined in Lemma~\ref{lem:decomp}.
\end{remark}
Let now $\Gamma_n[1],\dots,\Gamma_n[\sigma_n^d]$ denote the subcubes of $\Gamma_n, n\in \N$ with corresponding centers $x_n(i)$ and maximal preclusters $\mathcal{C}^{n-1,k}(x_n(i)),\mathcal{R}^{n-1,k}(x_n(i)), 1\leq i\leq\sigma_n^d$.
\begin{lemma}\label{lem:decomp}
For any $n\in\N$, we have
\begin{equation}\label{eq:eventdecomp}
	\begin{aligned}
		(C_n\cap B_n)^{\mathsf{c}}\cap A'_n \subset & \bigcup_{i=1}^{\sigma_n^d} (A_{n-1}(i)\cap E(i)) \\ 
		&\cup \bigcup_{i,j=1}^{\sigma_n^d} B(i)\cap B(j) \cap F(i,j).
	\end{aligned}
\end{equation}
where \begin{itemize}
	\item $A_{n-1}(i)=\{\Gamma_n[i]\text{ is alive}\}, 1\leq i\leq \sigma_n^d$;
	\item $B(i)=\{\Gamma_n[i]\text{ contains a }(\mu,v_{n-1})\text{-regular precluster} \}, 1\leq i\leq \sigma_n^d$;
	\item $E(i)=\{ \mathcal{C}^{n-1,k}(x_n(i)) \text{ is not }(\mu,v_{n-1})\text{-regular}\}, 1\leq i\leq \sigma_n^d$;
	\item $F(i,j)= \{\mathcal{R}^{n-1,k}(x_n(i)) \not\leftrightarrow \mathcal{R}^{n-1,k}(x_n(j))\}, 1\leq i,j\leq \sigma_n^d$.
\end{itemize}
\end{lemma}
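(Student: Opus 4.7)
The plan is to show the inclusion by a straightforward case analysis on how $C_n\cap B_n$ can fail while $A'_n$ holds. Split $(C_n\cap B_n)^{\mathsf{c}}\cap A'_n$ into the two disjoint parts
$$A'_n\cap B_n^{\mathsf{c}}\quad\text{and}\quad A'_n\cap B_n\cap C_n^{\mathsf{c}},$$
and I would show that the first is contained in $\bigcup_i A_{n-1}(i)\cap E(i)$ while the second is contained in $\bigcup_{i,j} B(i)\cap B(j)\cap F(i,j)$.

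For the first part, the argument is a simple pigeonhole. On $A'_n$, at least $r_n$ of the subcubes $\Gamma_n[i]$ are alive, i.e.\ satisfy $A_{n-1}(i)$. On $B_n^{\mathsf{c}}$, fewer than $r_n$ of the living subcubes contain a $(\mu,v_{n-1})$-regular precluster. Hence there must exist at least one index $i$ for which $A_{n-1}(i)$ holds but $\Gamma_n[i]$ admits no $(\mu,v_{n-1})$-regular precluster at all. Since $\mathcal C^{n-1,k}(x_n(i))$ is one of the preclusters of $\Gamma_n[i]$, in particular it fails $(\mu,v_{n-1})$-regularity and so $E(i)$ holds, yielding the desired inclusion for this part.

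For the second part I would argue by contrapositive. Suppose we are on $A'_n\cap B_n$ and that $F(i,j)^{\mathsf{c}}$ holds for every pair $(i,j)$ with $B(i)\cap B(j)$; i.e., whenever two distinct subcubes both carry a $(\mu,v_{n-1})$-regular precluster, their maximal regular preclusters $\mathcal R^{n-1,k}(x_n(i))$ and $\mathcal R^{n-1,k}(x_n(j))$ are adjacent in $\G$. Since $B_n$ provides at least $r_n$ subcubes $i_1,\dots,i_{r_n}$ with $B(i_\ell)$ holding, the associated maximal regular preclusters $\mathcal R^{n-1,k}(x_n(i_1)),\dots,\mathcal R^{n-1,k}(x_n(i_{r_n}))$ are a collection of $(\mu,v_{n-1})$-regular preclusters in distinct subcubes that are pairwise adjacent in $\G$. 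This is exactly the data witnessing $C_n$, so $C_n$ holds, contradicting our standing assumption. Hence on $A'_n\cap B_n\cap C_n^{\mathsf{c}}$ there must be at least one pair $(i,j)$ with $B(i)\cap B(j)\cap F(i,j)$, completing the inclusion.

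Combining the two cases gives the claimed bound. There is no substantive obstacle here; the only thing to be careful about is the bookkeeping distinction between the \emph{maximal} precluster (entering $E(i)$) and the \emph{maximal regular} precluster (entering $F(i,j)$), together with the observation that $B_n$ guarantees at least $r_n$ well-defined instances of $\mathcal R^{n-1,k}$, which is what makes the contrapositive step in the second case go through.
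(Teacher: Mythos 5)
Your proof is correct and follows essentially the same route as the paper: you use the identical disjoint decomposition of $(C_n\cap B_n)^{\mathsf{c}}\cap A'_n$ into $A'_n\cap B_n^{\mathsf{c}}$ and $A'_n\cap B_n\cap C_n^{\mathsf{c}}$, handle the first part by the same pigeonhole on living subcubes (noting that if a living subcube has no regular precluster then its maximal precluster is not regular, giving $E(i)$), and handle the second part by a contrapositive that is logically equivalent to the paper's direct argument via the index set $\mathcal{I}$. Your phrasing also correctly covers the degenerate case $r_n=1$, which the paper notes explicitly.
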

\begin{proof}
We have the disjoint decomposition $$ (C_n\cap B_n)^{\mathsf{c}}\cap A'_n = (B_n^{\mathsf{c}}\cap A'_n) \cup (C_n^{\mathsf{c}}\cap B_n \cap A'_n),$$
and on the first event $B_n^{\mathsf{c}}\cap A'_n$, there has to be a living subcube that has no $(\mu,v_{n-1})$-regular precluster, which implies that its largest precluster cannot be $(\mu,v_{n-1})$-regular. The second event satisfies
$$ 
C_n^{\mathsf{c}}\cap B_n \cap A'_n \subset \{\mathcal{R}^{n-1,k}(x_i) \leftrightarrow \mathcal{R}^{n-1,k}(x_i) \text{ for all }i,j\in \mathcal{I}\}^{\mathsf{c}} \cap B_n\cap A'_n,
$$
where $\mathcal{I}$ is the set of indices $i$, such that $\mathcal{R}^{n-1,k}(x_i)\neq \emptyset.$ If $r_n=1$ (which we have not explicitly excluded), then $C_n$ always holds on $B_n\cap A'_n.$ Otherwise, on $B_n\cap A'_n$, we have that $\mathcal{I}$ contains at least $r_n\geq 2$ indices. Hence on $\{\mathcal{R}^{n-1,k}(x_i) \text{ for all }i,j\in \mathcal{I}\}^{\mathsf{c}} \cap B_n\cap A'_n$, there has to be a pair of boxes containing a $(\mu,v_{n-1})$-regular precluster each, but such that the maximal such precluster in either box are not adjacent. Thus, \eqref{eq:eventdecomp} is established.
\end{proof}

The next two lemmas complete the estimates that we need to prove Lemma~\ref{lem:alivebound}. For their proofs we use the following auxiliary subsampling of vertices: Let $\eta$ be given. To each stage-$n$ cube $\Gamma_n(x)\in \mathfrak{C}(m_n)$, we assign a sample $X(x,n)=\{X_1(x,n),\dots,X_{v_n}(x,n)\}$ of \emph{tagged} vertex locations in $\eta\cap\Gamma_n(x)$, chosen uniformly without replacement and such that the families $$\{X(x,n), x\in m_n\Z^d, n\in\{0,1,2,\dots\} \}$$ are all mutually independent and also independent of all vertex and edge marks. If $\eta\cap\Gamma_n(x)$ contains fewer than $v_n$ vertices, then we set $X(x,n)=\emptyset$. We write $\X(x,n),x\in m_n\Z^d, n\in\{0,1,2,\dots\}$ for the vertices corresponding to the tagged sites. The configuration $\xi$ augmented by the independent tagging is denoted $\bar{\xi}$ and the induced probability distribution on tagged configurations by $\bar{\P}$.
\begin{lemma}\label{lem:b_nbound}
Let 
\[
b_n= \P\left(\bigcup_{i=1}^{\sigma_n^d} (A_{n-1}(i)\cap E(i))\right),\quad n\in\N,
\]
then
\[b_n\leq \sigma_n^d v_{n-1}^{1-\mu}\e^{-v_{n-1}^\mu/8},\quad n\in\N.
\]
\end{lemma}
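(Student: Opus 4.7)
The plan is to combine a union bound over the $\sigma_n^d$ subcubes of $\Gamma_n$ with Lemma~\ref{lem:regular}, handling the conditional distribution of marks inside the maximal precluster via the weak FKG property of the model. By translation invariance, it suffices to show
\[
\P(A_{n-1}(1) \cap E(1)) \leq v_{n-1}^{1-\mu}\e^{-v_{n-1}^\mu/8},
\]
which, multiplied by the $\sigma_n^d$ subcubes of $\Gamma_n$, yields the asserted bound.

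On $A_{n-1}(1)$ the construction guarantees $|\mathcal{C}^{n-1,k}(x_n(1))| \geq v_{n-1}$, and $E(1)$ asks that no $v_{n-1}$-subset of the maximal precluster has $\mu$-regular marks. The target bound is precisely the failure probability for $\mu$-regularity in Lemma~\ref{lem:regular} applied to an i.i.d.\ Uniform sample of size $v_{n-1}$, so the work consists in reducing the random statement about $\mathcal{C}^{n-1,k}$ to a statement about such a sample. The reduction rests on a monotonicity observation: decreasing any vertex or edge mark can only add edges, so the \emph{size} of the maximal precluster is a non-decreasing function of the configuration. Combined with the weak FKG property, this implies that conditioning on $A_{n-1}(1)$ stochastically biases the marks of the vertices in $\mathcal{C}^{n-1,k}$ toward small values; equivalently, the empirical CDF of the precluster marks stochastically dominates the Uniform CDF from above, which is precisely the direction of the inequality in the definition of $\mu$-regularity. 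A coupling then transfers the bound of Lemma~\ref{lem:regular} with $|M| = v_{n-1}$ to the precluster marks and gives the claim.

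The main obstacle is to make the previous paragraph rigorous, since although the cardinality of the maximal precluster is monotone in the configuration, its \emph{identity} as a subset of vertex locations need not be, and can change under monotone perturbations of the marks. I would address this by conditioning on the edge marks and on the realised location set $L$ of $\mathcal{C}^{n-1,k}$, and invoking Holley's inequality for the conditional density of the marks on $L$, which is log-modular in the required direction on the event that $L$ is the maximal precluster. A more elementary alternative is to pick a canonical $v_{n-1}$-subset of $\mathcal{C}^{n-1,k}$ (for example, indexed by the lexicographically smallest locations in $L$) and to verify the stochastic domination only for the marks of that subset; here the event ``the chosen subset is contained in a precluster of size at least $v_{n-1}$'' is manifestly increasing, so the weak FKG property applies directly and the Lemma~\ref{lem:regular} bound can be read off.
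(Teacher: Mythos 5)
Your high-level strategy (union bound over subcubes, then reduce the mark distribution inside the maximal precluster to an i.i.d.\ Uniform sample via positive correlation and apply Lemma~\ref{lem:regular}) is indeed the paper's approach, and you correctly identify the central obstacle: the identity of $\mathcal{C}^{n-1,k}$ as a vertex set depends on the marks. However, neither of your two proposed remedies actually closes this gap. The ``canonical subset'' variant (b) does not work: whether you take the lexicographically smallest $v_{n-1}$ locations of the realised precluster or any other rule that selects a subset \emph{of} $\mathcal{C}^{n-1,k}$, that subset $L^\ast$ is itself mark-dependent, so the marks of $L^\ast$ are not i.i.d.\ Uniform conditionally on $\eta$, and the weak FKG property does not apply to them. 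The event you describe, ``the chosen subset is contained in a precluster of size at least $v_{n-1}$,'' holds automatically on $A_{n-1}(1)$ by construction of $L^\ast$, so it carries no information; what would be needed is control of the conditional law of $L^\ast$'s marks, which does not follow from the observation that some increasing event occurs. The Holley variant (a) is left entirely unverified: conditioning on ``$L$ is the maximal precluster'' is a complicated, non-monotone conditioning on both vertex and edge marks, and it is not apparent that the resulting mark density on $L$ satisfies the lattice condition you invoke.

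The device the paper uses, and which is missing from your proposal, is an \emph{independent} uniform tagging: one samples $v_{n-1}$ locations $X(x_n(1),n-1)$ uniformly from $\eta\cap\Gamma_n[1]$ (not from the precluster), independently of all marks. The corresponding marks $S$ are then genuinely i.i.d.\ Uniform given $\eta$. The key identity is that a uniform sample from a finite set, conditioned to lie in an independently generated random subset, has the law of a uniform sample from that subset; hence conditioning the pair $(\xi, X)$ on the event $D=\{\X(x_n(1),n-1)\subset\mathcal{C}^{n-1,k}(x_n(1))\}$ reproduces exactly a uniform sample from the precluster. The point is that $D$ \emph{is} an increasing event in $S$ (decreasing the tagged marks only grows $\mathcal{C}^{n-1,k}$, so tagged vertices cannot leave it), while $\{S\text{ is not }\mu\text{-regular}\}$ is decreasing in $S$. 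FKG then drops the conditioning on $A_{n-1}(1)\cap D$ and Lemma~\ref{lem:regular} applies directly to the unconditioned i.i.d.\ sample $S$. This decoupling of the sample's identity from the marks is precisely what your canonical-subset construction lacks.
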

\begin{proof}
A simple union bound and translation invariance yield
\begin{equation}\label{eq:simplebnbound}
b_n\leq \sigma_n^d \P(A_{n-1}(1)\cap E(1)), \quad n\in\N,
\end{equation}
Hence it remains to estimate the probability on the right. Fix $n\geq 1$. We define an alternative tagging of vertex locations in $\Gamma_n[1]$ depending on $\eta$ as well as edge and vertex marks. Namely, we set $Y=\emptyset$ on $A_{n-1}(1)^{\mathsf{c}}$ and on $A_{n-1}(1)$, we set $Y=\{Y_1,\dots,Y_{v_{n-1}}\}$, where $Y_1,\dots,Y_{v_{n-1}}$ are chosen uniformly without replacement amongst the vertex locations belonging to the maximal precluster $\mathcal{C}^{n-1,k}(x_n(1))$ of $\Gamma_n(1)$. Let $\bar{\mathbf{P}}_\eta(\cdot):=\bar{\P}(\cdot|\eta)$ and denote the joint distribution of $\xi$ and $Y$ by $\tilde{\P}$ and its conditional version given a fixed point configuration $\eta$ by $\tilde{\mathbf{P}}_\eta$. Note that on $A_{n-1}(1)$, $\eta$ must have at least $v_{n-1}$ points in $\Gamma_n[1]$. It follows from the uniformity of the sample $X(x_n(1),n-1)$ and its independence of edge end vertex marks, that
\begin{equation}\label{eq:conditonaleq}
\begin{aligned}
&\tilde{\mathbf{P}}_\eta((\xi,Y)\in \cdot \;| A_{n-1}(1))\\ 
& = \bar{\mathbf{P}}_\eta\big((\xi,X(x_n(1),n-1))\in \cdot \;| A_{n-1}(1), \X(x_n(1),n-1)\subset \mathcal{C}^{n-1,k}(x_n(1)) \big),
\end{aligned}
\end{equation}
since a uniform sample drawn from a finite set $S$ conditioned to be contained in an independently generated random subset $S'\subset S$ has the same distribution as a uniform sample drawn from $S'$. We have
\begin{equation}\label{eq:withtilde}
\begin{aligned}	
\P(A_{n-1}(1)\cap E(1)|\eta) & = \bar{\mathbf{P}}_\eta(E(1)\cap A_{n-1}(1)) \\
& \leq \bar{\mathbf{P}}_\eta(E(1)| A_{n-1}(1)) = \tilde{{\mathbf{P}}}_\eta(E(1)|A_{n-1}(1)),
\end{aligned}
\end{equation}
where the we define the conditional probabilities to equal $0$ if $|\eta\cap\Gamma_n[1]|<v_{n-1}$ and the equalities are due to the fact that the events $E(1),A_{n-1}(1)$ do not involve the tagging at all. However, denoting by $S=\{S_1,\dots,S_{v_{n-1}}\}$ the vertex marks belonging to the tagged vertex locations in $X(x_n(1),n-1)$ and by $T=\{T_1,\dots,T_{v_{n-1}}\}$ the vertex marks belonging to the tagged vertex locations in $Y$, we also have
\begin{equation}\label{eq:barfortilde}
\begin{aligned}
&\tilde{{\mathbf{P}}}_\eta(E(1)|A_{n-1}(1))\leq \tilde{\P}(T \text{ is not $\mu$-regular}|A_{n-1}(1))\\
&= \bar{{\mathbf{P}}}_\eta\big(S \text{ is not $\mu$-regular} | A_{n-1}(1) \cap\{ \X(x_n(1),n-1)\subset \mathcal{C}^{n-1,k}(x_n(1)) \}\big),
\end{aligned}
\end{equation}
by \eqref{eq:conditonaleq}. Yet $S$ is an i.i.d.\ sample of $v_{n-1}$ $\operatorname{Uniform}(0,1)$ random variables under $\bar{\P}(\cdot|\eta)$, whenever $|\eta\cap \Gamma_n(1)|\geq v_{n-1}$. Moreover, we claim that the events $A_{n-1}(1)$ and $D:=\{ \X(x_n(1),n-1)\subset \mathcal{C}^{n-1,k}(x_n(1))\}$ are increasing in $S$ and that the event $\{S \text{ is not $\mu$-regular}\}$ is decreasing in $S$. This is easily seen to be true for $A_{n-1}(1)$ and $E(1)$. The statement for $D$ holds because if $(\xi,X(x_n(1),n-1))$ is such that $\{ \X(x_n(1),n-1)\subset \mathcal{C}^{n-1,k}(x_n(1))\}$, then decreasing any one of the marks $S_i, 1\leq i\leq v_n$, can only increase the maximal precluster $\mathcal{C}^{n-1,k}(x_n(1))$ in size (or decrease the lowest mark, if there is a tie in sizes), in particular this means that none of the tagged vertices can leave $\mathcal{C}^{n-1,k}(x_n(1))$ if their marks are decreased. Combining \eqref{eq:withtilde} with \eqref{eq:barfortilde}, the FKG-inequality and Lemma~\ref{lem:regular}, we obtain
\[
\P(A_{n-1}(1)\cap E(1)|\eta)\leq \bar{{\mathbf{P}}}_\eta(T \text{ is not $\mu$-regular})\leq v_{n-1}^{1-\mu}\e^{-v_{n-1}^\mu/8}.
\]
Integration over the point configurations $\eta$ and inserting the result into \eqref{eq:simplebnbound} yields
\[
b_n\leq \sigma_n^d v_{n-1}^{1-\mu}\e^{-v_{n-1}^\mu/8}
,\]
which concludes the proof.
\end{proof}
\begin{lemma}\label{lem:c_nbound}
Let 
\[
c_n= \P \left(\bigcup_{i,j=1}^{\sigma_n^d} B(i)\cap B(j) \cap F(i,j)\right),\quad n\in\N,
\]
then
\begin{align*}
	c_n
	\leq & \, 2\sigma_n^{2d} v_{n-1}^{1-\mu}\e^{-v_{n-1}^\mu/8} \\
	& \, + \sigma_n^{2d} \exp\Big(- C v_{n-1}^2 \int_{v_{n-1}^{-(1-\mu)}}^{1-v_{n-1}^{-(1-\mu)}}\int_{v_{n-1}^{-(1-\mu)}}^{1-v_{n-1}^{-(1-\mu)}}\varphi( s,t, \sqrt{d}m_n)\textup{d}s\textup{d}t\Big),
\end{align*}	
\end{lemma}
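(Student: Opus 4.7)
}
The plan is to mimic the tagging argument of Lemma~\ref{lem:b_nbound}, but now carried out simultaneously on two subcubes, and then to feed the resulting uniform samples into Lemma~\ref{lem:preciseconnection}. First I would use translation invariance together with a union bound to reduce to pairs of indices, writing
\[
c_n\leq \sum_{i,j=1}^{\sigma_n^d} \P\bigl(B(i)\cap B(j)\cap F(i,j)\bigr),
\]
and then argue the bound for a single pair $(i,j)$. Note that on $B(i)\cap B(j)$, both maximal $(\mu,v_{n-1})$-regular preclusters $\mathcal{R}^{n-1,k}(x_n(i))$ and $\mathcal{R}^{n-1,k}(x_n(j))$ are nonempty and hence contain at least $v_{n-1}$ vertices each, whose locations are at mutual distance at most $\sqrt{d}\,m_n$.

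Next I would introduce, for each of the two subcubes, the independent uniform sample $X(x_n(\cdot),n-1)$ of $v_{n-1}$ tagged vertex locations and the companion sample $Y^{(\cdot)}$ drawn uniformly without replacement from $\mathcal{R}^{n-1,k}(x_n(\cdot))$ (or empty if $B(\cdot)$ fails). Denote by $S^{(i)},S^{(j)}$ the marks of the tagged vertices and by $D_i=\{\X(x_n(i),n-1)\subset \mathcal{R}^{n-1,k}(x_n(i))\}$, similarly for $D_j$. Exactly as in the proof of Lemma~\ref{lem:b_nbound}, uniformity of the tagging and its independence of the marks yield, conditionally on $\eta$, the identity in distribution
\[
\tilde{\mathbf{P}}_\eta\bigl(\,\cdot\mid B(i)\cap B(j)\bigr)=\bar{\mathbf{P}}_\eta\bigl(\,\cdot\mid B(i)\cap B(j)\cap D_i\cap D_j\bigr)
\]
for the joint law of $(\xi,Y^{(i)},Y^{(j)})$ and $(\xi,X(x_n(i),n-1),X(x_n(j),n-1))$.

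I would then split the event $F(i,j)\cap B(i)\cap B(j)$ according to whether the tagged samples $S^{(i)}$ and $S^{(j)}$ are both $\mu$-regular. If either fails to be $\mu$-regular, I would apply the FKG inequality — noting that $B(i),B(j),D_i,D_j$ are increasing in the tagged marks (decreasing a tagged mark can only enlarge the corresponding maximal regular precluster and therefore cannot expel any tagged vertex from it) while the event $\{S^{(\cdot)} \text{ not $\mu$-regular}\}$ is decreasing — to remove the conditioning, and then invoke Lemma~\ref{lem:regular} for each of the two samples of size $v_{n-1}$, accounting for the factor $2\sigma_n^{2d} v_{n-1}^{1-\mu}\e^{-v_{n-1}^\mu/8}$ via a union bound over which side fails. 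On the complementary event the tagged samples are $(\mu,v_{n-1})$-regular subsets of $\mathcal{R}^{n-1,k}(x_n(i))$ and $\mathcal{R}^{n-1,k}(x_n(j))$, respectively, so any edge between them would witness $\{\mathcal{R}^{n-1,k}(x_n(i))\leftrightarrow \mathcal{R}^{n-1,k}(x_n(j))\}$, contradicting $F(i,j)$. Since all tagged locations lie within a stage-$n$ cube of diameter $\sqrt{d}\,m_n$, Lemma~\ref{lem:preciseconnection} (applied conditionally on $\eta'$ and then integrated) gives the second term on the right-hand side of the asserted bound.

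The main technical obstacle is the simultaneous monotonicity bookkeeping in step two: one must verify that, with tagging performed on both subcubes at once, the events $B(i)\cap B(j)\cap D_i\cap D_j$ are jointly increasing in the two tagged-mark families while the regularity-failure events are decreasing, so that FKG can be used to drop the conditioning without inflating the regularity bound. Once this is in place, the Poissonian combinatorial estimates feeding Lemmas~\ref{lem:regular} and \ref{lem:preciseconnection} deliver the stated two-term bound with $D=\sqrt{d}\,m_n$ and $v=v_{n-1}$.
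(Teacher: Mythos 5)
Your overall architecture is the right one: union bound over pairs, introduce the two independent taggings $X(x_n(i),n-1)$ and $X(x_n(j),n-1)$, establish the conditional identity in distribution with the $\mathcal{R}$-taggings $Y^{(i)},Y^{(j)}$, and then aim to combine Lemma~\ref{lem:regular} with Lemma~\ref{lem:preciseconnection}. The regularity-failure term is handled exactly as in the paper, via FKG for the decreasing event conditioned on the increasing event $B(i)\cap B(j)\cap G$.

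However, there is a genuine gap in how you obtain the second term. After the translation to the independent tagging you are left with a quantity of the form
\[
\bar{\mathbf{P}}_\eta\big(\{\X(x_n(i),n-1)\not\leftrightarrow \X(x_n(j),n-1)\}\cap\{S(i),S(j)\text{ $\mu$-regular}\}\;\big|\;B(i)\cap B(j)\cap G\big),
\]
and you cannot simply ``apply Lemma~\ref{lem:preciseconnection} conditionally on $\eta'$ and integrate'': the conditioning on the increasing event $B(i)\cap B(j)\cap G$ is not $\eta'$-measurable (it involves edge marks, and $G$ involves the tagging) and biases the joint law of $S(i),S(j),V(i,j)$ upward. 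Dropping the conditioning naively produces a factor $1/\P(B(i)\cap B(j)\cap G)$. One therefore needs FKG for the connection event as well, not just for the regularity event. But then a measurability issue arises: the direct-connection event $\{\X\leftrightarrow\X\}$ depends on the actual tagged positions (distances matter), so conditionally on $\eta$ alone it is not a function of $(S(i),S(j),V(i,j))$, and the FKG factorisation with $B(i)\cap B(j)\cap G$ does not decouple from the tagging. The paper's resolution is precisely the ``strong connection'' event $\leftrightharpoons$ defined with the worst-case distance $\sqrt{d}\,m_n$: this makes $\{\leftrightharpoons\}\cap\{\text{reg}\}$ increasing \emph{and} measurable in $\sigma(S(i),S(j),V(i,j))$ only, hence conditionally independent of the tagging given $\eta$, so FKG yields $\bar{\mathbf{P}}_\eta(\{\leftrightharpoons\}\cap\{\text{reg}\}\mid B\cap B\cap G)\geq\bar{\mathbf{P}}_\eta(\{\leftrightharpoons\}\cap\{\text{reg}\})$, after which one conditions on regularity and invokes Lemma~\ref{lem:preciseconnection}. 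Without introducing $\leftrightharpoons$ (or an equivalent device), your plan does not close the FKG step for the second term.
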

\begin{proof}
We use a similar approach as in the proof of Lemma~\ref{lem:b_nbound}, albeit we need to take a little more care, since the events involved are more complicated. Let $n\in\N$, $1\leq i<j\leq \sigma_n^d$, and the corresponding subcubes $\Gamma_n(i),\Gamma_n(j)\subset \Gamma_n$ be fixed. Our goal is to bound the probability $\P(B(i)\cap B(j) \cap F(i,j))$. As in the previous proof, we define additional randomly tagged locations that depend on edge and vertex marks. More precisely, set $Y(i)=Y(j)=\emptyset$ on $(B(i)\cap B(j))^{\mathsf{c}}$ and on $B(i)\cap B(j)$, we sample two sets of locations $Y(i)=\{Y_1(i),\dots,Y_{v_{n-1}}(i)\}$ from $\mathcal{R}^{n-1,k}(x_n(i))$ and $Y(j)=\{Y_1(j),\dots,Y_{v_{n-1}}(j)\}$ from $\mathcal{R}^{n-1,k}(x_n(j))$, respectively, uniformly and without replacement. The joint distribution of $\xi$ and the tagged sets $Y(i),Y(j)$ is denoted by $\tilde{\P}$. The vertex mark collections corresponding to $Y(i)$ and $Y(j)$ are denoted by $T(i)$ and $T(j)$, the corresponding vertex sets by $\mathbf{Y}(i),\mathbf{Y}(j)\subset \eta'$,  and the vertex mark sets associated with the independently tagged locations $X(x_n(i),n-1)$ and $X(x_n(j),n-1)$ are denoted by $S(i)$ and $S(j)$, respectively. The edge marks on potential edges between $X(x_n(i),n-1)$ and $X(x_n(j),n-1)$ are denoted by \[
V(i,j)=\{V_{s,t}(i,j), 1\leq s,t\leq v_{n-1}\}.
\]
 Arguing precisely as in the proof of Lemma~\ref{lem:b_nbound}, we find that
\begin{equation}\label{eq:tildetobar2}
\begin{aligned}
&\tilde{\mathbf{P}}_\eta((\xi,Y(i),Y(j))\in\cdot\;| B(i)\cap B(j) )\\
& =\tilde{\mathbf{P}}_\eta((\xi,X(x_n(i),n-1),X(x_n(j),n-1))\in\cdot\;| B(i)\cap B(j)\cap G),
\end{aligned}	
\end{equation}
where 
$$
G=\{\X(x_n(i),n-1)\subset \mathcal{R}^{n-1,k}(x_n(i))\}\cap \{\X(x_n(j),n-1)\subset \mathcal{R}^{n-1,k}(x_n(j))\}
$$
and $\mathbf{P}_\eta,\tilde{\mathbf{P}}_\eta$ and $\bar{\mathbf{P}}_\eta$ denote the conditional versions of $\P,\tilde{\P}$ and $\bar{\P}$, respectively, given a fixed point configuration $\eta$.
We may thus rewrite
\begin{equation}\label{eq:qijrewrite}
\begin{aligned}
&{\mathbf{P}}_\eta(F(i,j)|B(i)\cap B(j))\\
&=\tilde{\mathbf{P}}_\eta(F(i,j)|B(i)\cap B(j))\\
&\leq \tilde{\mathbf{P}}_\eta(\mathbf{Y}(i)\not\leftrightarrow \mathbf{Y}(j)|B(i)\cap B(j))\\
&=\bar{\mathbf{P}}_\eta(\X(x_n(i),n-1)\not\leftrightarrow \X(x_n(j),n-1)|B(i)\cap B(j)\cap G).
\end{aligned}
\end{equation}
Let us say, that two vertices $\x=(x,s),\y=(y,t)$ with $x,y\in\eta\cap \Gamma_n$ are \emph{strongly connected}, if 
\[
V_{\x\y}\leq 1-\textup{e}^{\varphi(s,t,\sqrt{d}m_n)}.
\]
If $V,W\subset \eta'\cap \Gamma_n\times(0,1)$ are disjoint vertex sets, then we set
\[
\{V \leftrightharpoons W\} :=\{ \exists \x\in V, \y\in W:\; \x\text{ and }\y\text{ are strongly connected} \}.
\]
We further have, that
\begin{equation}\label{eq:strongconnectionbound}
\begin{aligned}
		\bar{\mathbf{P}}_\eta & (\X(x_n(i),n-1)\leftrightarrow \X(x_n(j),n-1)|B(i)\cap B(j)\cap G)\\
		\geq&\,\bar{\mathbf{P}}_\eta(\X(x_n(i),n-1)\leftrightharpoons \X(x_n(j),n-1)|B(i)\cap B(j)\cap G)\\
		=&\,\bar{\mathbf{P}}_\eta(\{\X(x_n(i),n-1)\leftrightharpoons \X(x_n(j),n-1)\}\\
		&\, \cap \{S(i),S(j)\text{ are }\mu\text{-regular}\}|B(i)\cap B(j)\cap G),
\end{aligned}
\end{equation}
since $\sqrt{d}m_n$ is an upper bound for the distance of any two vertices in $\Gamma_n$ and the event $\{S(i),S(j)\text{ are }\mu\text{-regular}\}$ almost surely occurs conditionally on $G$. Under $\bar{\mathbf{P}}_\eta$ with $\eta$ placing sufficiently many points into subcubes such that $\bar{\mathbf{P}}_\eta(B(i)\cap B(j))>0$, the joint distribution of $S(i),S(j)$ and $V(i,j)$ is $$\operatorname{Uniform}(0,1)^{\otimes v_{n-1}^2+2v_{n-1}}.$$ Furthermore, the event $$\{\X(x_n(i),n-1)\leftrightharpoons \X(x_n(j),n-1)\}\cap \{S(i),S(j)\text{ are }\mu\text{-regular}\}$$ is measurable w.r.t.\ $\sigma(S(i),S(j),V(i,j))$ and increasing. $B(i)\cap B(j)$ is clearly an increasing event w.r.t.\ to the full configuration $\xi$ and $G$ is increasing in $S(i),S(j)$ and $V(i,j)$, since if $\bar{\xi}$ satisfies $G$, then any configuration obtained by decreasing a vertex mark in $S(i)\cup S(j)$ or an edge mark in $V(i,j)$ must also be in $G$. For vertex marks, this is checked as in the proof of Lemma~\ref{lem:b_nbound} under the additional provision that $(\mu,v_n)$-regularity be not violated for either maximal precluster, which follows from the monotonicity of that property in $S(i)$ and $S(j)$, respectively. For the edge marks $V(i,j)$, this follows from the fact that the $\G$ (and therefore the composition of the maximal preclusters) can only be affected if the edge corresponding to the mark is added. But since $\bar{\xi}$ satisfies $G$, this means adding an edge incident to both maximal $(\mu,v_{n-1})$-regular preclusters, which can only make those clusters larger. We conclude that we may apply the FKG-inequality, Lemma~\ref{lem:regular} and Lemma~\ref{lem:preciseconnection} to \eqref{eq:strongconnectionbound} to obtain
\begin{equation}\label{eq:strongconnectionbound2}
	\begin{aligned}
		\bar{\mathbf{P}}_\eta&(\X(x_n(i),n-1)\leftrightarrow \X(x_n(j),n-1)| B(i)\cap B(j)\cap G)\\
		\geq &\bar{\mathbf{P}}_\eta(\{\X(x_n(i),n-1)\leftrightharpoons \X(x_n(j),n-1)\}\cap \{S(i),S(j)\text{ are }\mu\text{-regular}\})\\
		\geq &\Big(1-2 v_{n-1}^{1-\mu}\e^{-v_{n-1}^\mu/8}\Big) \\
		 & \bar{\mathbf{P}}_\eta(\X(x_n(i),n-1)\leftrightharpoons \X(x_n(j),n-1)| S(i),S(j)\text{ are }\mu\text{-regular})\\
		\geq &\Big(1-2 v_{n-1}^{1-\mu}\e^{-v_{n-1}^\mu/8}\Big)\\
		& \Big(1-\exp\Big[- C v_{n-1}^2 \int_{v_{n-1}^{-(1-\mu)}}^{1-v_{n-1}^{-(1-\mu)}}\int_{v_{n-1}^{-(1-\mu)}}^{1-v_{n-1}^{-(1-\mu)}}\varphi( s,t, \sqrt{d}m_n) \textup{d}s\textup{d}t\Big]\Big)\\
		\geq & 1-2 v_{n-1}^{1-\mu}\e^{-v_{n-1}^\mu/8}-\exp\Big[- C v_{n-1}^2 \int_{v_{n-1}^{-(1-\mu)}}^{1-v_{n-1}^{-(1-\mu)}}\int_{v_{n-1}^{-(1-\mu)}}^{1-v_{n-1}^{-(1-\mu)}}\varphi( s,t, \sqrt{d}m_n) \textup{d}s\textup{d}t\Big].
	\end{aligned}
\end{equation}
Combining the estimate \eqref{eq:strongconnectionbound2} with \eqref{eq:qijrewrite} and integrating over point configurations $\eta$, we get
\begin{align*}
{\P}(F(i,j)|B(i)\cap B(j))\leq &\, 2 v_{n-1}^{1-\mu}\e^{-v_{n-1}^\mu/8}\\ 
&\, +\exp\Big(- C v_{n-1}^2 \int_{v_{n-1}^{-(1-\mu)}}^{1-v_{n-1}^{-(1-\mu)}}\int_{v_{n-1}^{-(1-\mu)}}^{1-v_{n-1}^{-(1-\mu)}}\varphi( s,t, \sqrt{d}m_n)\textup{d}s\textup{d}t\Big),
\end{align*}
and this estimate is uniform in the choice of subcubes $\Gamma_n(i),\Gamma_n(j)$. It follows that
\begin{align*}
c_n & \leq \sum_{i,j=1}^{\sigma_n^d} \P(B(i)\cap B(j) \cap F(i,j)) \leq \sum_{i,j=1}^{\sigma_n^d} \P(F(i,j)|B(i)\cap B(j) )\\
&\leq 2\sigma_n^{2d} v_{n-1}^{1-\mu}\e^{-v_{n-1}^\mu/8} + \sigma_n^{2d} \textup{e}^{- C v_{n-1}^2 \int_{v_{n-1}^{-(1-\mu)}}^{1-v_{n-1}^{-(1-\mu)}}\int_{v_{n-1}^{-(1-\mu)}}^{1-v_{n-1}^{-(1-\mu)}}\varphi( s,t, \sqrt{d}m_n)\textup{d}s\textup{d}t},
\end{align*}	
and the proof is concluded.
\end{proof}
We are now ready to prove Lemma~\ref{lem:alivebound}.
\begin{proof}[{Proof of Lemma~\ref{lem:alivebound}}]
By Lemma~\ref{lem:decomp}, we have
\[
a_n= \P(A_n^{\mathsf{c}})  = \P((A'_n)^{\mathsf{c}})+\P( (B_n\cap C_n)^{\mathsf{c}}\cap A'_n ),\quad n\in\N,
\]
thus combining Lemmas~\ref{lem:a_nbound}, \ref{lem:b_nbound} and \ref{lem:c_nbound} yields, for any $n\in\N$,
\begin{equation}\label{eq:auxbound0}
\begin{aligned}	
a_n\leq \frac{a_{n-1}}{1-\varrho_n} & + (2\sigma_n^{2d}+\sigma_n^d) v_{n-1}^{1-\mu}\e^{-v_{n-1}^\mu/8} \\
 & + \sigma_n^{2d} \exp\Big(- C v_{n-1}^2 \int_{v_{n-1}^{-(1-\mu)}}^{1-v_{n-1}^{-(1-\mu)}}\int_{v_{n-1}^{-(1-\mu)}}^{1-v_{n-1}^{-(1-\mu)}}\varphi( s,t, \sqrt{d}m_n)\textup{d}s\textup{d}t\Big)
\end{aligned}
\end{equation}
To bound the right hand side further, we first observe that, by definition of $\sigma_n$, 
\begin{equation}\label{eq:auxbound1}
2\sigma_n^{2d}+\sigma_n^d\leq 4 n^{2d\omega},
\end{equation}
for all but finitely many $n\in\N$. Since $v_n\to\infty$ as $n\to\infty$, we also have
\begin{equation}\label{eq:auxbound2}
v_{n-1}^{1-\mu}\e^{-v_{n-1}^\mu/8}\leq \e^{-v_{n-1}^\mu/9},
\end{equation}
for all sufficiently large $n$, and finally, we claim that
\begin{equation}\label{eq:auxbound3}
	\sqrt{d}m_n\leq v_{n-1}^{\nu/d},
\end{equation}
for all sufficiently many $n$, which we show at the end of the proof. Inserting \eqref{eq:auxbound1}--\eqref{eq:auxbound3} into \eqref{eq:auxbound0}, we obtain
\begin{align*}
a_n & \leq \frac{a_{n-1}}{1-\varrho_n} + 4n^{2d\omega}\Big(\e^{-v_{n-1}^\mu/9}+\exp\Big[- C v_{n-1}^2 \int_{v_{n-1}^{-(1-\mu)}}^{1-v_{n-1}^{-(1-\mu)}}\int_{v_{n-1}^{-(1-\mu)}}^{1-v_{n-1}^{-(1-\mu)}}\varphi( s,t, v_{n-1}^{\nu/d})\textup{d}s\textup{d}t \Big] \Big).
\end{align*}
Setting $\tilde{v}_{n-1}=v_{n-1}^{\nu}$ and using the definition of $\bar{\delta}_{\mathsf{eff}}(\mu^*)<2$ as well as $\mu<\mu^*$, we see that there exists some small $\zeta_0>0$ with $\bar{\delta}_{\mathsf{eff}}(\mu^*)+\zeta_0<2$, such that for every $\zeta\in(0,\zeta_0)$ there exists some $N(\zeta)$ such that for all $n>N(\zeta)$
\begin{equation*}
\begin{aligned}
a_n  & \leq \frac{a_{n-1}}{1-\varrho_n} + 4n^{2d\omega}\Big(\e^{-v_{n-1}^\mu/9}+\textup{e}^{- C \tilde{v}_{n-1}^{\frac{2}{\nu} -\bar{\delta}_{\mathsf{eff}}(\mu^*)- \zeta}} \Big) \\
& = \frac{a_{n-1}}{1-\varrho_n} + 4n^{2d\omega}\Big(\e^{-v_{n-1}^\mu/9}+\textup{e}^{- C {v}_{n-1}^{{2} -\nu\bar{\delta}_{\mathsf{eff}}(\mu^*)- \nu\zeta}}\Big).
\end{aligned}
\end{equation*}
Using that $\bar{\delta}_{\mathsf{eff}}(\mu^*)<2$ and the choice \eqref{eq:nudef} of $\nu$, it is easy to see that if we chose $\zeta$ small enough, we can find some small value $\mu_0$ (depending on $\mu,\nu$), such that for all sufficiently large $n$
\begin{equation*}
	\begin{aligned}
		a_n  \leq \frac{a_{n-1}}{1-\varrho_n} + 5n^{2d\omega}\e^{-v_{n-1}^{\mu_0}}.
	\end{aligned}
\end{equation*}
From the choice of $\omega$ in \eqref{eq:omegadef1} and the definition of $v_{n-1}$, it follows that $v_n$ grows at least like a small power of $n!$. Since $\rho_n$ decays only polynomially, we can find some large $L\in\N$, such that 
\[
a_n  \leq \frac{a_{n-1}}{1-\varrho_n} + a_0\varrho_n,\quad \text{for all }n>L,
\]
and since $\varrho_n<1/4$ for all $n\in\N$, we conclude that
\[
a_0\varrho_n + a_{n-1}(1+2\varrho_n)\leq (1+3\varrho_n)(a_0\vee a_{n-1}),\quad \text{for all }n>L,
\]
which by induction yields
$$a_n\leq (a_0 \vee a_L) \prod_{k=L+1}^n(1+3\varrho_k) \quad \text{for all }n>L.$$ Since $(\varrho_n)$ is summable, the product on the right hand side converges and we obtain the uniform bound \eqref{eq:mainprobbound} asserted in the lemma.

We conclude by verifying \eqref{eq:auxbound3}. Note that
\[
(\sqrt{d}m_n)^d= v_{n-1} \,\frac{2 d^{d/2} \sigma_n^d}{\theta \prod_{i=1}^{n-1} \varrho_i},
\]
and, writing $\nu=1+\varepsilon_\nu$ with $\varepsilon_\nu>0$, it is sufficient to show for all sufficiently large $n\in\N$,	
\[
\frac{ 2 d^{d/2} \sigma_n^d}{\theta \prod_{i=1}^{n-1} \varrho_i}\leq v_{n-1}^{\varepsilon_\nu},.
\]
which follows from the following calculation based on the choices of $\varrho_n$ and $\sigma_n$: We can find numbers $K,R\in\N$, such that for all $n>K+1$,
\begin{align*}
	L(n):=\log(v_{n-1}^{\varepsilon_\nu}) & = \log\left[(\theta m_0^d/2)^{\varepsilon_\nu} \prod_{i=1}^{n-1}\varrho_i^{\varepsilon_\nu}\sigma_i^{\varepsilon_\nu d}\right] \\
	& = \log\left[(\theta m_0^d/2)^{\varepsilon_\nu}\right]+ \varepsilon_\nu \sum_{i=1}^{n-1}(\log(\varrho_i)+ d\log(\sigma_i))\\
	&  \geq \log\left[2 d^{d/2}/\theta\right] + \varepsilon_\nu (d\omega -2) \sum_{i=K}^{n-1}\log i - R,
\end{align*}
Using the the bound \eqref{eq:omegadef1}, we see that $\varepsilon_\nu (d\omega -2)>2$ and hence we can find $\varepsilon'$ such that
\[
L(n)\geq \log\left[2 d^{d/2}/\theta\right] + (2+\varepsilon') \sum_{i=1}^{n-1}\log i \geq \log\left[d^{d/2}/\theta\right] + d\log \sigma_n - \sum_{i=1}^{n-1}\log(\varrho_n),
\]
for all sufficiently large $n$, which concludes the proof.
\end{proof}
The remainder of this section is devoted to the completion of the proof of Theorem~\ref{thm:sublinearclusters}. 
\begin{proof}[Proof of Theorem~\ref{thm:sublinearclusters}]
	Let $\varepsilon\in(0,1/2)$ and $0<\lambda<1$ be given. Fix $\omega$ such that
	\begin{equation}\label{eq:omegadef2}
		\omega>\frac{2 \nu}{d(\nu-1)} \vee \frac{2}{d(1-\lambda)},
	\end{equation}
	and note that this condition implies \eqref{eq:omegadef1} and let the other parameters $\mu,\nu,(\sigma_n)$ and $(\rho_n)$ be defined as before. We have not yet specified the initial cube length $\ell=m_0$ and the parameter $k$ used in the definition of preclusters, which we do now. By ergodicity, we may choose $\ell$ so large, that with probability exceeding $1-\varepsilon/2$, there is a set $A$ of at least $\theta \ell^d /2$ vertices inside $\Gamma_0$ that belong to the infinite cluster. Since the infinite cluster is unique, there is some $k^\ast(\ell)<\infty$ such that all the vertices in $A$ are contained within the same cluster of $\G[\Delta_k\Gamma_0]$ with probability exceeding $1-\varepsilon/2$ if $k>k^\ast(\ell)$. We thus have shown that, with probability exceeding $1-\varepsilon$ we can find a $(v_0,\mu)$-precluster in $\Gamma_0$ and conclude that $a_0=\P(\Gamma_0\text{ is not alive})\leq \varepsilon.$ Invoking Lemma~\ref{lem:alivebound}, we now obtain that 
	\[
	\P(\Gamma^n \text{ is not alive})\leq C \varepsilon, \quad n\in\N,
	\]
	where $C$ depends only on $\varphi$ and $\ell$.	If the stage-$L$ cube $\Gamma_L$ is alive, then it follows from the definitions of aliveness and preclusters, that $\Delta_k\Gamma_L$ contains a cluster of size
	\[
	v_{L}= \frac{\theta \ell^d}{2}\prod_{i=1}^{L} {\varrho_i}\sigma_i^d.
	\]
	Let $\tilde{\Gamma}_L$ denote the union of $\Gamma_L$ with the $3^d-1$ stage-$L$ cubes neighbouring it. Since $k$ depends only on the initial cube size $\ell$, we can find $L_0\in\N$ such that
\[
\Delta_k\Gamma_L\subset\tilde\Gamma_L,\quad \text{for all }L\geq L_0.
\] From the choice of $(\varrho_n),(\sigma_n)$ and \eqref{eq:omegadef2} we can also deduce the existence of constants $0<q_1,q_2,q_3,q_4<\infty$, such that
	\begin{align*}
		v_{L}\geq q_1\, \frac{\theta \ell^d}{2}\prod_{n=1}^L n^{d\omega -2}\geq q_2\, \frac{\theta m^d}{2}\left(\prod_{n=1}^L \sigma_n^d\right)^{\frac{d\omega-2}{d\omega}}=q_3\, \theta \textup{vol}({\Gamma}_L)^{1-\frac{2}{d\omega}}>q_4\textup{vol}(\tilde{\Gamma}_L)^{\lambda},
	\end{align*}
	for $L$ sufficiently large. Since we can choose $\varepsilon$ arbitrarily close to $0$ and $\lambda$ arbitrarily close to $1$, the conclusion of Theorem~\ref{thm:sublinearclusters} now follows easily for the subsequence $(\P(|\C_{\mathsf{max}}(\G[\Gamma^{m_n}])|> m_n^{\lambda d}), n\in\N)$. To obtain the result for the original sequence, fix $\varepsilon>0$ and $1>\lambda'>\lambda$ arbitrarily. Now choose $N$ so large that for all $k\geq N$ both $$\P(|\C_{\mathsf{max}}(\G[\Gamma^{m_k}])|> m_k^{\lambda' d})>1-\varepsilon \quad \text{ and } \quad m_k^{\lambda'd}>m_{k+1}^{\lambda d}$$ are satisfied. Then, if $n\geq M_n$, we can always find $k$ with $m_k\leq n<m_{k+1}$ and such that $$\P(|\C_{\mathsf{max}}(\G[\Gamma^{n}])|> n^{\lambda d})\geq \P(|\C_{\mathsf{max}}(\G[\Gamma^{n}])|> m_n^{\lambda' d}) \geq \P(|\C_{\mathsf{max}}(\G[\Gamma^{m_n}])|> m_n^{\lambda' d})\geq 1-\varepsilon,$$
and the proof is complete.
\end{proof}	
\section{Transience}\label{sec:transience}
We show that $\C_\infty$ is transient by explicitly constructing a transient subgraph. To this end, we use the notion of \emph{renormalised graphs} \cite[Def.\ 2.8]{Berge02}. 
\begin{definition}\label{def:grs} 
	An \emph{$\ell$-merger} of an infinite graph $\mathcal{H}$ is any graph $\mathcal{H}'$ obtainable by partitioning $V(\mathcal{H})$ into subsets $v_1,v_2,\dots$ of size $\ell$, setting $V(\mathcal{H}')=\{v_i,i\in\N\}$ and $v_iv_j\in E(\mathcal{H}')$ if and only if there are $u_i\in v_i,u_j\in v_j$ with $u_iu_j\in V(\mathcal{H})$. The graph $G_0=(V_0,E_0)$ is \emph{renormalised for the sequence} $(\ell_n)_{n\in\N}$ if we can construct a sequence of $G_1,G_2,\dots$ of graphs with $G_i=(V_i,E_i)$ such that
	\begin{itemize}
		\item $G_i$ is an $\ell_i$-merger of $G_{i-1}$ for all $i\in\N$;
		\item for every $i\geq 2$, there is a partition of $V_i$ into subsets $v_1,v_2,\dots$ of size $\ell_{i+1}$ such that for every $k$ and every pair $(u_1,u_2)\in v_k\times v_k$ (interpreted as subsets of $V_{i-1}$) and every $w_1\in u_1,w_2\in u_2$ (interpreted as a pair of subsets of $V_{i-2}$), we have that either $xy\in E_{i-2}$ for all $x\in w_1,y\in w_2$, or .
	\end{itemize} 	
\end{definition}
\begin{remark}
The wording of our definition of renormalised graphs differs from \cite[Def.\ 2.8]{Berge02}, but it is straightforward to check that the two formulations are in fact equivalent.
\end{remark}

\begin{prop}\label{prop:RGS}
If $\eta$ is either a Poisson process or an independently percolated lattice, $\varphi$ is such that $\bar{\delta}_{\mathsf{eff}}(0+)<2$ and $\theta>0$, then $\G$ contains a transient subgraph almost surely.
\end{prop}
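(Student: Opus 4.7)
The strategy is to show that $\C_\infty$ contains almost surely an explicit renormalised graph sequence (RGS) in the sense of Definition~\ref{def:grs}, and then invoke the known fact that every RGS supports a transient flow (as in \cite[Prop.\ 2.11]{Berge02}). Since transience of a subgraph implies transience of the ambient locally finite graph by monotonicity of effective conductance, this will conclude the argument.

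The construction proceeds by iterating the multi-scale machinery of Section~\ref{sec:PercolationFB}. First, I would promote the probabilistic estimate of Lemma~\ref{lem:alivebound} into an almost-sure statement about an infinite nested sequence of alive cubes. Given $\varepsilon>0$, the initial side length $\ell$ can be chosen so large that $a_0<\varepsilon$, by ergodicity together with uniqueness of $\C_\infty$. Lemma~\ref{lem:alivebound} then yields $\sup_n a_n\leq C\varepsilon$. For both canonical choices of $\eta$ (Poisson process or i.i.d.\ percolated lattice), the events $\{\Gamma_n(x)\text{ is alive}\}$ depend on vertices, vertex marks, and edge marks localised in $\Delta_k\Gamma_n(x)$, and are therefore finite-range dependent in $x$. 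A Peierls-type argument, combined with $\varepsilon$ being arbitrary, then shows that with probability tending to $1$ as $\ell\to\infty$ the cube $\Gamma_0$ is alive and is the bottom of an infinite increasing sequence of alive stage-$n$ cubes; by ergodicity this event in fact has probability~$1$.

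Next, I would read off the RGS from this nested structure. Let $V_0$ be the vertex set of a maximal $(\mu,v_0)$-regular precluster inside $\Gamma_0$. For each $n\geq 1$, condition \textsf{C}($n$) supplies $r_n=\lceil\varrho_n\sigma_n^d\rceil$ mutually adjacent $(\mu,v_{n-1})$-regular preclusters across distinct subcubes of the alive stage-$n$ cube. Grouping these into a single level-$n$ super-vertex and setting $\ell_n=r_n$ yields the desired $\ell_n$-merger property. The stronger density requirement in Definition~\ref{def:grs}, which asks that within each level-$i$ super-vertex every pair of constituent sub-sub-vertices be bipartitely well-connected at level $i-2$, is handled by a single additional application of Lemma~\ref{lem:preciseconnection}: two $(\mu,v_{n-2})$-regular preclusters sitting in subcubes of a common stage-$(n-1)$ cube fail to be fully linked in $\G$ with probability at most $\exp(-Cv_{n-2}^{\,2-\nu\bar{\delta}_\mathsf{eff}(\mu^\ast)-o(1)})$. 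Thanks to the choice of parameters in \eqref{eq:mustardef}--\eqref{eq:omegadef1}, summing these probabilities across all pairs and all scales gives a convergent series, so Borel--Cantelli guarantees that the required density conditions hold simultaneously from some finite scale onward almost surely.

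The main obstacle, in my view, is handling the long-range dependencies induced by the vertex marks, which are absent in the i.i.d.\ long-range percolation setting of \cite{Berge02}: one small vertex mark influences edge probabilities at every scale, so one cannot directly appeal to the independent-edge calculations underpinning Berger's original RGS construction. The resolution, however, is already built into Section~\ref{sec:PercolationFB}: the $\mu$-regularity framework together with Lemma~\ref{lem:regular} isolates the effect of vertex marks into an event of overwhelming probability, and the FKG property combined with the tagged-subsampling trick used in the proofs of Lemmas~\ref{lem:b_nbound} and \ref{lem:c_nbound} effectively restores the Bernoulli-type factorisation of connection probabilities needed to verify the RGS axioms. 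Once the RGS is realised as an almost-sure subgraph of $\C_\infty$, transience follows by the standard flow construction on renormalised mergers.
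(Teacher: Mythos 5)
Your overall plan---realise a renormalised graph sequence (RGS) inside $\C_\infty$ and invoke Berger's transience criterion---is indeed the paper's plan, but the crucial structural step is glossed over, and I think the gloss conceals a genuine gap.

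The paper does \emph{not} reuse the Section~\ref{sec:PercolationFB} scheme with a single patch. It builds an entirely new multiscale scheme (with parameters $\alpha_n=\lceil(n+1)^{2\lambda d}\rceil$, $\sigma_n=(n+1)^2$) whose goodness conditions $\mathsf{F}_1(n)$--$\mathsf{F}_3(n)$ intertwine level $n$ with level $n-2$: a stage-$n$ cube is good only if its $\alpha_n$ chosen subcubes are \emph{pairwise well-connected}, meaning that the $(n-2)$-renormalised clusters sitting two levels down inside each subcube are \emph{all mutually adjacent}, not just that a chain of adjacencies exists. This recursive bipartite condition is precisely what Definition~\ref{def:grs} demands, and it has no analogue in the conditions $\mathsf{A}(n)$--$\mathsf{C}(n)$ of Section~\ref{sec:PercolationFB}, which only guarantee a clique of $r_n$ preclusters \emph{at level} $n-1$. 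Your proposal treats this as ``a single additional application of Lemma~\ref{lem:preciseconnection}'' followed by Borel--Cantelli, but that does not produce a coherent RGS: the objects that appear as level-$(n-2)$ super-vertices must already have been constructed as renormalised clusters of that scheme, so the extra adjacencies cannot be bolted on after the fact; they must be built into the recursion (which is exactly why the paper's Remark before Lemma~\ref{lem:towerisgood} flags the ``intertwining of levels $n$ and $n+2$'' as the complication). Secondly, your route to the almost-sure statement is different from the paper's: you invoke a Peierls argument and Borel--Cantelli, whereas the paper notes that transience of $\C_\infty$ is a tail event, hence has probability $0$ or $1$ by ergodicity, and then uses FKG to bound $\P(\bigcap_n L_n)\geq\prod_n\P(L_n)$, reducing the problem to showing a convergent infinite product. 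Your Peierls/Borel--Cantelli alternative is not obviously wrong, but it is not the paper's argument and would need to be carried out in detail; as written it hides the bookkeeping that the paper does explicitly with the recursion $\P(L_n)>\ell_n:=1-(n+1)^{-3/2}$. In short: correct high-level idea, but you have underestimated the redesign of the renormalisation scheme that the RGS bipartite condition forces, so the middle of the proof is missing.
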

Define for \(n\in\N\) the values
\begin{align*}
	\alpha_n&:=\left\lceil(n+1)^{2\lambda d}\right\rceil,\quad \sigma_n:=(n+1)^2,
\end{align*}
where $\lambda\in(1/2,1)$. Our goal is to show that $\C_\infty$ contains a subgraph that is renormalised for $(\alpha_n)_{n\in\N}$, by \cite[Lemma 2.7]{Berge02}, this implies transience.\medskip

Fix furthermore a parameter $\mu\in(0,1/2)$ which governs the regularity of vertex weights just as in the previous section. Once again, the scaling sequence $(\sigma_n)$ tells us how fast the scales of the renormalisation scheme grow, but the construction of connections between clusters at different scales will be significantly different to make it compatible with Definition~\ref{def:grs}. Recall that $\mathfrak{C}(m)$ denotes the collection of disjoint cubes of side-length $m$ centred at the points of $m\Z^d$. A cube
\[
\Gamma\in \mathfrak{C}^n := \mathfrak{C}\left(\prod_{i=1}^n \sigma_i\right),
\]
for $n\geq 1$, is called a \emph{stage-$n$ cube}. We now define the procedure that will allow us to conclude the renormalisability of $\C_\infty$ for $(\alpha_n)$. The scheme requires us to start at some sufficiently large scale, so let \(n_1\in\N\) be the smallest stage of cubes we will consider. We do not fix \(n_1\) yet and assume only that it is large.
We now define what it means for a cube to be \emph{good} or \emph{bad}. We begin at the bottom levels, namely $\Gamma\in \mathfrak{C}^{n_1}$ is \emph{good}  if
\begin{description}
	\item[$\mathsf{E}(n_1)$] The subgraph $\G[\Gamma]$ contains a $\left(\prod_{i=1}^{n_1}\alpha_i, \mu\right)$-regular connected component.
\end{description}
We call a connected component of $\G[\Gamma]$ which realises condition $\mathsf{E}(n_1)$ an \emph{$n_1$-renormalised cluster}. 
Moving to level $n_1+1$, we declare an $(n_1+1)$-level cube $\Gamma$ \emph{good}, if
\begin{description}
	\item[$\mathsf{E}_1(n_1+1)$] it has at least $\alpha_{n_1+1}$ good subcubes at level $n_1$;
	\item[$\mathsf{E}_2(n_1+1)$] at least $\alpha_{n_1+1}$ good subcubes of $\Gamma$ at level $n_1$  contain $n_1$-renormalised clusters which are all mutually adjacent in $\G[\Gamma];$
	\item[$\mathsf{E}_3(n_1+1)$] the adjacency requirement $\mathsf{E}_2(n_1+1)$ produces at least one cluster in $\mathcal{G}[\Gamma]$ that is $(\prod_{i=1}^{n_1+1}\alpha_i, \mu)$-regular.
\end{description}
A cluster qualifying for the condition imposed in $\mathsf{E}_3(n_1+1)$ is called an \emph{$(n_1+1)$-{renormalised}} cluster.\medskip	 

Having declared what happens at the bottom levels, we are now prepared to initiate the recursive part of the scheme. For a given level-$n$ cube $\Gamma$ with $n>n_1+1$, we say that a pair $(\Gamma_1,\Gamma_2)$ of good subcubes is $\emph{well-connected}$, if 
\begin{itemize}
	\item there exist $(n-2)$-{renormalised} 
	clusters $\mathcal{C}^{i}_1,\dots,\mathcal{C}^{i}_{\alpha_{n-2}}$ inside $\Gamma_i$, for $i=1,2$;
	
	\item each of the pairs of sets $\{ \mathcal{C}^{1}_k,\mathcal{C}^{2}_l: 1\leq k,l\leq \alpha_{n-2} \}$ is adjacent in $\mathcal{G}[\Gamma]$.
\end{itemize}
Based on the goodness at levels $n_1,n_1+1$ and the notion of well-connectedness, we now declare an stage-\(n\) cube \(\Gamma\) with $n>n_1+1$ to be \emph{good}, 
if there exists a subset $\mathfrak{F}$ of the good subcubes of $\Gamma$ satisfying
\begin{description}
	\item[$\mathsf{F}_1(n)$] $|\mathfrak{F}|\geq \alpha_n$;
	\item[$\mathsf{F}_2(n)$] any pair $(\Gamma_1,\Gamma_2)\in \mathfrak{F}\times\mathfrak{F}$ is well-connected;
	\item[$\mathsf{F}_3(n)$] at least one cluster formed by mutual well-connectedness of $\{(\Gamma_1,\Gamma_2):\Gamma_{1},\Gamma_{2}\in\mathfrak{F}\}$ 
	from the $(n-2)$-{renormalised} clusters inside subcubes of $\Gamma_1,\Gamma_2$ is $(\prod_{i=1}^{n}\alpha_i,\mu)$-regular.
\end{description}%
Any cluster formed from $(n-2)$-{renormalised}
clusters in the way specified by $\mathsf{F}_1(n)$--$\mathsf{F}_3(n)$ is called an \emph{$n$-renormalised cluster} of $\Gamma$.
Observe that the event $\{\Gamma\in\mathfrak{C}^n\}$ is good is increasing. 
\begin{remark}
The recursive architecture induced by a hierarchy of good cubes is more complex than the one used in the proof of Theorem~\ref{thm:sublinearclusters}, due to the intertwining of levels $n$ and $n+2$. However, note that the goodness of $\Gamma$ only depends on $\G[\Gamma]$ and therefore is independent of the status of cubes on the same level, if $\eta$ is the (percolated) lattice or a Poisson process.
\end{remark}
A careful inspection of the above construction yields that it produces indeed a renormalised graph sequence.
\begin{lemma}\label{lem:towerisgood}
	If $\Gamma_n, n\geq n_1$ are all good, then $\C_\infty$ contains a subgraph that is renormalised for $(\alpha_n)$.
\end{lemma}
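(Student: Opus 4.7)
The plan is to exhibit explicitly an infinite subgraph $H \subset \C_\infty$ together with a nested family of partitions of $V(H)$ realising $H$ as a graph renormalised for $(\alpha_n)$ in the sense of Definition~\ref{def:grs}. Transience of $\C_\infty$ then follows from the standard transience criterion for renormalised graphs with the required polynomial growth of merger sizes, see, e.g., \cite[Lemma~2.7]{Berge02}.

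I would construct this subgraph inductively from the nested good cubes $(\Gamma_n)_{n \geq n_1}$. At each level $n \geq n_1$, goodness of $\Gamma_n$ supplies an $n$-renormalised cluster $\widetilde{\mathcal{C}}_n \subset \Gamma_n$ which we trim to exactly $\prod_{i=1}^n \alpha_i$ vertices. Compatible choices are made between consecutive levels: when invoking goodness of $\Gamma_n$, we are free to take $\Gamma_{n-1} \in \mathfrak{F}$ (since $\Gamma_{n-1}$ is good by hypothesis) and to take the $(n-2)$-renormalised cluster witnesses for well-connectedness inside $\Gamma_{n-1}$ to be those already used to build $\widetilde{\mathcal{C}}_{n-2}$. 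Some additional care is required because the recursive definition of goodness at stages $\geq n_1+2$ skips by two, so the inductive compatibility has to be tracked along both parities of $n - n_1$ simultaneously. The resulting union $V_0 := \bigcup_{n \geq n_1} \widetilde{\mathcal{C}}_n$ is infinite and, via the mutual adjacency ensured at each stage of the construction, lies in a single connected subgraph of $\G$, which is therefore $\C_\infty$ by the standing uniqueness assumption. To complete the merger hierarchy below level $n_1$, within each $n_1$-renormalised cluster I impose an arbitrary nested partition into groups of sizes $\alpha_1, \alpha_1\alpha_2, \ldots, \prod_{i=1}^{n_1} \alpha_i$; above level $n_1$ the partitions are given directly by the cube hierarchy, a vertex of $V_n$ corresponding to an $n$-renormalised cluster.

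Verifying Definition~\ref{def:grs} reduces to checking the uniformity condition at two levels down: for each $i \geq 2$, each $(i+1)$-level super-super-vertex, each pair of $i$-level children $u_1, u_2$ and each pair of $(i-1)$-level grandchildren $w_1 \in u_1, w_2 \in u_2$, the $V_{i-2}$-level edges between sub-blocks of $w_1$ and those of $w_2$ are uniform. This is exactly what the well-connectedness condition $\mathsf{F}_2$ at the appropriate cube stage supplies: the chosen $(n-2)$-renormalised cluster witnesses from two good stage-$(n-1)$ subcubes of a common stage-$n$ cube are pairwise $\G$-adjacent, which translates directly into uniform presence of the relevant $E_{i-2}$-edges.

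The main obstacle I anticipate is purely bookkeeping: ensuring that the $(n-2)$-renormalised clusters furnished by well-connectedness at stage $n$ coincide with the sub-blocks already designated inside $\widetilde{\mathcal{C}}_{n-2}$ at the next level down, consistently across both parities of the skip-by-two recursion. This is a finite consistency check at each scale and poses no probabilistic difficulty; all the real content has already been absorbed into the definition of goodness, so once the compatibility is set up the verification is a routine combinatorial unwrapping.
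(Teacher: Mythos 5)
Your plan does not work as stated: the step where you assert that "we are free to take $\Gamma_{n-1}\in\mathfrak{F}$ (since $\Gamma_{n-1}$ is good by hypothesis)" is a genuine gap, not bookkeeping. The witness $\mathfrak{F}$ for goodness of $\Gamma_n$ must satisfy $\mathsf{F}_2(n)$ (every pair of its members is well-connected) and $\mathsf{F}_3(n)$; being a good subcube is only necessary, not sufficient, for membership in some valid $\mathfrak{F}$. In particular the central subcube $\Gamma_{n-1}$ may be good but fail to be well-connected to enough of the other good subcubes, so that no admissible $\mathfrak{F}$ containing it has $|\mathfrak{F}|\ge\alpha_n$. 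The same issue recurs one level down: well-connectedness is an existence statement for \emph{some} collection of $(n-2)$-renormalised clusters inside $\Gamma_{n-1}$, and you have no warrant to replace those by the specific $(n-2)$-renormalised cluster $\widetilde{\mathcal{C}}_{n-2}$ you already fixed at stage $n-2$ (which lives in the further distinguished central subcube $\Gamma_{n-2}$ of $\Gamma_{n-1}$, which itself need not be among the stage-$(n-2)$ subcubes furnishing the well-connectedness witnesses). As a consequence the claimed nesting $\widetilde{\mathcal{C}}_{n-2}\subset\widetilde{\mathcal{C}}_{n}$ is not established, and $V_0:=\bigcup_{n\ge n_1}\widetilde{\mathcal{C}}_n$ need not carry a nested partition of the required type.

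The deeper issue that your dismissal of the "bookkeeping" obscures is that each $n$-renormalised cluster is a finite object which is read off top-down (the witness $\mathfrak{F}$ at stage $n$ determines which $(n-2)$-renormalised clusters one sees), whereas a renormalised graph in the sense of Definition~\ref{def:grs} is an infinite object that requires a single consistent hierarchy of partitions at all scales; producing it requires some mechanism (a consistency/diagonalisation argument, or a modification of the definition of goodness so that the central subcube and the previously chosen renormalised cluster are forced into the witness sets) that the paper sweeps under the carpet with "a careful inspection of the above construction", and which your argument does not supply either. Until you explain how to extract a single infinite tower of compatible witness sets from the hypothesis that all the $\Gamma_n$, $n\ge n_1$, are good, the proof is incomplete.
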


Having defined our renormalisation scheme subject to the precise choice of the parameters $\lambda,\mu$ and $n_1$, we are now ready to complete the proof.
\begin{proof}[Proof of {Lemma~\ref{prop:RGS}}]
	The calculation is very similar to the proof of Theorem~\ref{thm:sublinearclusters}, which allows us to recycle some of the parameters chosen there. Let, in particular, $\mu^\ast>0$ be given as in \eqref{eq:mustardef} and define $\nu>1$ and $\mu<\mu^\ast$ as in \eqref{eq:nudef} and \eqref{eq:mudef}. Finally, choose
	\begin{equation}\label{eq:lambdadef}
		\lambda\in\left(\frac{1}{2 \wedge \mu},1\right).
	\end{equation}

	Let $E_n,E^i_n,F^i_n, i\in\{1,2,3\},$ denote the events that the conditions $\mathsf{E}(n),\mathsf{E}_i(n),\mathsf{F}_i(n)$ are satisfied respectively for the stage-$n$ cube $\Gamma^n$ centred at the origin. Similarly, we write $L_n, n\geq n_1$ for the event that $\Gamma^n$ is good. Note that transience of $\C_\infty$ has either probability $0$ or $1$ by ergodicity. Due to translation invariance and Lemma~\ref{lem:towerisgood} it is thus enough to show that
	\begin{align*}
		\P\Big(\bigcap_{n=n_1}^\infty L_n\Big)>0,
	\end{align*}
	and since the events \(L_n\) are increasing, this can be further simplified, using the FKG inequality, to 
	\begin{align*}
		\prod_{n=n_1}^\infty\P(L_n)>0.
	\end{align*}
	Note that by construction, \(L_{n_1}\) and \(L_{n_1+1}\) are defined differently than the other scales, so we will bound their probabilities separately. We first upper bound the probability of the converse event \(L_n^{\mathsf{c}}\) for \(n>n_1+1\) and begin by writing
	\begin{align}\label{eq:lnc}
		\P(L_n^{\mathsf{c}})= \P((F^1_n)^{\mathsf{c}})+ \P((F^2_n)^{\mathsf{c}}\cap F^1_n)+ \P((F^3_n)^{\mathsf{c}}\cap (F^1_n\cap F^2_n)).
	\end{align}
	To bound $\P((F^3_n)^{\mathsf{c}}\cap (F^1_n\cap F^2_n))$, one may argue as in the proof of Lemma~\ref{lem:b_nbound} to obtain
	\begin{equation*}
		\P((F^3_n)^{\mathsf{c}}\cap (F^1_n\cap F^2_n))\leq \exp\left(-\frac{1}{8}\prod_{i=1}^n \alpha_i^\mu\right)\prod_{i=1}^n \alpha_i^{1-\mu}.	
	\end{equation*}
	It follows that, for $n$ sufficiently large,
	\begin{equation}\label{eq:F3bound}
		\P((F^3_n)^{\mathsf{c}}\cap (F^1_n\cap F^2_n))\leq \e^{-C((n-1)!)^c}
	\end{equation}
	for some constants $c,C>0$ which only depend on $\mu.$ Moving on to bound $\P((F^2_n)^{\mathsf{c}}\cap F^1_n)$, note that any two vertices in $\Gamma^n$ are at most at distance
	\begin{align*}
		\sqrt{d}\prod_{k=1}^n \sigma_k=\sqrt{d}((n+1)!)^2
	\end{align*}
	away from each other. Let \(q_n\) be the probability that two renormalised clusters of \((n-2)\)-level subcubes in the same \(n\)-level cube are not connected. We note that by construction, any $(n-2)$-renormalised cluster in any good \((n-2)\)-level cube contains at least \(\prod_{i=1}^{n-2}\alpha_i\) vertices. Therefore, by \Cref{lem:preciseconnection}, and noting that $\lambda>1/\nu$ implies
	\[ \left(\prod_{i=1}^{n-2}\alpha_i \right)^\nu \geq ((n-1)!)^{2\lambda \nu d} > \sqrt{d}((n+1)!)^2 \text{ for all sufficiently large }n,\] 
	we may argue precisely as in the proof of Lemma~\ref{lem:c_nbound} to obtain that
	\begin{align*}
		q_n
		&\leq\exp\{-C((n-1)!)^c\}
	\end{align*}
	for some positive constants \(c,C\).
	There are 
	\begin{align*}
		{\sigma_n^d \sigma_{n-1}^d\choose 2 }<4^d(n+1)^{4d}
	\end{align*}
	possible pairs of \(n-2\)-level cubes in $\Gamma^n$. Taking a union bound, we obtain, for some $\tilde{c}>0$,
	\begin{equation}\label{eq:fnc}
\begin{aligned}		
	\P((F^2_n)^{\mathsf{c}}\cap F^1_n) & \leq \exp\{d\log(4)+4d\log(n+1)- c((n-1)!)^C\}\\
	 &\leq \exp\{{ -\tilde c((n-1)!)^C}\},
\end{aligned}
\end{equation}
	where the second inequality holds for all sufficiently large \(n\).
	We now proceed to bound \(\P(F^1_n)\). Note that by independence, the number of good subcubes of $\Gamma$ dominates a $\operatorname{Bin}(m,q)$ random variable $X$, where \(q=\mathbb{P}(L_{n-1})\), \(m=\sigma_n^d\). Fixing $\Theta\in(0,1)$, Chernoff's bound states \[\P(X<(1-\Theta)mq)\leq\exp\{-\frac{1}{2}\Theta^2mq\},\] i.e.\ for \(\Theta=1-\frac{\alpha_n}{\sigma_n^d}\frac{1}{\mathbb{P}(L_{n-1})}\) this leads to
	\begin{align}\label{eq:enc}
		\P(F^1_n)&\geq1-\exp\Big\{-\frac{1}{2}\Big(1-(n+1)^{2d(\lambda-1)}\frac{1}{\P(L_{n-1})}\Big)^2\P(L_{n-1})(n+1)^{2d}\Big\},\nonumber\\
		&=1-\exp\Big\{-\frac{1}{2}\big((n+1)^{2d(1-\lambda)}\P(L_{n-1})-1\big)^2(n+1)^{4d\lambda-2d}\P(L_{n-1})^{-1}\Big\}\nonumber\\
		&\geq 1-\exp\Big\{-\frac{1}{2}((n+1)^{2d(1-\lambda)}\P(L_{n-1})-1)^2(n+1)^{4d\lambda-2d}\Big\}
	\end{align}
	where we used the definitions of \(\alpha_n\), \(\sigma_n\) and the definition of \(F^1_n\) itself. Combining \eqref{eq:F3bound}, (\ref{eq:enc}) and (\ref{eq:fnc}) into (\ref{eq:lnc}) and relabelling constants, we obtain the recursive inequality
	\begin{align*}
		\P(L_n^{\mathsf{c}})&\leq2\exp\{{ - c((n-1)!)^C}\}
		+\exp\left\{-\frac{1}{2}\big((n+1)^{2d(1-\lambda)}\P(L_{n-1})-1\big)^2(n+1)^{4d\lambda-2d}\right\}.
	\end{align*}
	The same bound applies to \(\P(L_{n_1+1}^c)\), since the calculation for the complements of the defining events \(E^1_{n_1+1},E^2_{n_1+1}\) and $E^3_{n_1+1}$ can be done along the same lines as for the $F^i_n$. Finally, we have by Theorem~\ref{thm:sublinearclusters} that for any \(\lambda\in(0,1)\) and any \(\varepsilon>0\), \(\P(L_{n_1})>1-\varepsilon\) if \(n_1\) is chosen sufficiently large.\medskip
	
	Define now the sequence \(\ell_n:=1-(n+1)^{-3/2}\) and observe that \(\prod_{i=1}^{\infty}\ell_i>0\).
	We will now show that if \(\P(L_n)>\ell_n\), then it follows inductively that \(\P(L_{n+1})>\ell_{n+1}\). We calculate
	\begin{align*}
		\P(L_n^{\mathsf{c}})&\leq \exp\{{ - \tilde c((n-1)!)^C}\}
		+\exp\{-\frac{1}{2}\big((n+1)^{2d(1-\lambda)}(1-2^{-3/2})-1\big)^2(n+1)^{4d\lambda-2d}\}\\
		&\leq(n+1)^{-3/2}\\
		&=1-\ell_n,
	\end{align*}
	where the second inequality holds if \(n\) is sufficiently large. Let \(n_1\) now be large enough so that all \(n>n_1\) satisfy the previous assumptions about \(n\) being large and furthermore let \(n_1\) be large enough that \(\P(L_{n_1})>1-2^{-3/2}\). Then, using the same calculation yields that \(\P(L_{n_1+1})>\ell_{n_1+1}\) and the claim follows for all larger \(n\).
	
	We can now write 
	\begin{align*}
		\prod_{n=n_1}^\infty\P(L_n)=\P(L_{n_1})\prod_{n=n_1+1}^\infty\P(L_n)\geq\P(L_{n_1})\prod_{n=n_1+1}^\infty\ell_n>0.
	\end{align*}
	Together with \Cref{lem:towerisgood}, this gives the existence of the renormalized graph sequence with positive probability and concludes the proof.
\end{proof}

\section{Continuity properties of percolation}\label{sec:continuity}
For a given graph $\mathcal{G}$, we denote by $p\mathcal{G}$ the graph obtained from $\mathcal{G}$ by independent Bernoulli vertex percolation and that $\eta$ is finite range, if $\eta(A)$ and $\eta(B)$ are independent, whenever $A$ and $B$ are sufficiently far separated.
\begin{prop}(Continuity of percolation from the left, $d\geq 2$)\label{prop:leftcont}
Let $d\geq 2$ and let $\G$ be an instance of inhomogeneous long range percolation on a finite range point set $\eta$ with $\bar\delta_{\mathsf{eff}}(0+)<2$. Assume that an infinite cluster exists. Then there exists $p<1$ such that $p\G$ contains an infinite cluster almost surely.
\end{prop}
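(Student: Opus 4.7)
The plan is to exhibit a high-density finite-range dependent site process on $n\Z^d$ which is stochastically dominated by the event that neighbouring cubes host surviving large clusters of $p\G$ connected by direct long-range edges, and then invoke the Liggett--Schonmann--Stacey (LSS) comparison with supercritical nearest-neighbour Bernoulli site percolation on $\Z^d$. The restriction $d\geq 2$ enters precisely through this comparison.

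Fix a small $\mu>0$ and $\lambda\in(\deff(0+)/2,1)$, which is possible by the weak-decay hypothesis. For $x\in n\Z^d$, declare $\Gamma^n(x)$ \emph{good} if $\G[\Delta_k\Gamma^n(x)]$ contains a connected component with at least $N_n:=\lceil n^{\lambda d}\rceil$ vertices whose marks form a $(\mu,N_n)$-regular set. By Theorem~\ref{thm:sublinearclusters} combined with Lemma~\ref{lem:regular} (applied to the i.i.d.\ marks on this large cluster, which are independent of $\eta$ and of the edge marks), $\P(\Gamma^n(0)\text{ is good})\to 1$ as $n\to\infty$. Now fix $p\in(0,1)$ close to $1$ and perform Bernoulli $p$-vertex percolation. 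A Chernoff estimate shows that the surviving subset of the selected good cluster retains at least $pN_n/2$ vertices, and a second application of Lemma~\ref{lem:regular} shows that these surviving marks form a $(\mu',\cdot)$-regular collection for some $\mu'<\mu$, with probability at least $1-\textup{e}^{-cN_n}$. Call $\Gamma^n(x)$ \emph{$p$-good} under this enhanced event. Since $p$-goodness depends only on $\eta$, vertex marks, edge marks and the $p$-percolation coins inside $\Delta_k\Gamma^n(x)$, the finite-range hypothesis on $\eta$ and the i.i.d.\ nature of the marks imply that $p$-goodness of $\Gamma^n(x)$ and $\Gamma^n(y)$ is independent whenever $|x-y|_{\infty}$ exceeds some absolute constant (depending on $k$ and the range of $\eta$).

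For two adjacent $p$-good cubes (with $|x-y|_{\infty}\leq 3$, say), the surviving $\mu'$-regular vertex sets have cardinality of order $N_n$ and lie in a region of diameter $O(n)$. By Lemma~\ref{lem:preciseconnection}, they are joined by a direct long-range edge of $p\G$ with probability at least
\[
1-\exp\left(-C\,(pN_n/2)^2\int_{(pN_n/2)^{-(1-\mu')}}^{1-(pN_n/2)^{-(1-\mu')}}\int_{(pN_n/2)^{-(1-\mu')}}^{1-(pN_n/2)^{-(1-\mu')}}\varphi(s,t,c n)\di s\di t\right).
\]
By the definition of $\deff(\mu')$ and the choice $\lambda>\deff(0+)/2\geq \deff(\mu')/2$ for $\mu'$ small, the exponent is of order $n^{d(2\lambda-\deff(\mu')-o(1))}\to\infty$ as $n\to\infty$. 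Hence, for $n$ large and $p$ sufficiently close to $1$, the probability that $\Gamma^n(x)$ is $p$-good \emph{and} directly connected (through the preserved clusters) to each of its neighbouring cubes within the finite dependence range can be pushed arbitrarily close to $1$. Declaring $x\in n\Z^d$ open when this joint event holds yields a finite-range dependent site percolation on $n\Z^d$ whose density can be made to exceed the LSS threshold, producing an infinite open cluster. Following the surviving preclusters along such an infinite path delivers an infinite component in $p\G$, as required.

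The main obstacle is the simultaneous control of three separately small bad events after Bernoulli $p$-percolation with the \emph{same} $p<1$: (i) absence of a $(\mu,N_n)$-regular large precluster in a cube, (ii) an atypical loss of mass or regularity of the surviving vertices, and (iii) failure of direct adjacency between the surviving clusters of two neighbouring cubes. The trade-off is to choose $\lambda$ large enough (above $\deff(0+)/2$) so that (iii) can be beaten by the renormalisation gain in Lemma~\ref{lem:preciseconnection}, yet strictly below $1$ so that $N_n$ fits inside the cube, while keeping $\mu$ and $\mu'$ small enough to preserve regularity under $p$-thinning and to stay within the weak-decay regime $\deff(\mu')<2$.
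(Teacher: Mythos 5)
Your overall strategy (renormalise to cubes, use Lemma~\ref{lem:preciseconnection} to connect large regular clusters in adjacent cubes, then invoke the Liggett--Schonmann--Stacey comparison) is the same as the paper's, but the way you transfer the estimate from $\G$ to $p\G$ contains a genuine gap. You thin a fixed $(\mu,N_n)$-regular cluster $\C$ by Bernoulli($p$) site percolation, control the cardinality of the surviving set $\C\cap p\G$ by a Chernoff bound, and then treat the surviving set as the $p$-good ``precluster''. However, the surviving vertex set need not be connected in $p\G$: after site percolation the induced subgraph on $\C\cap p\G$ can break into many pieces, even for $p$ close to $1$ and even though $\C$ was connected in $\G$. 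Lemma~\ref{lem:preciseconnection} only gives you a \emph{single} direct edge between the surviving sets of two neighbouring cubes, so on an infinite open path in the coarse-grained site model there is no guarantee that the long-range edge from $\Gamma^n(x)$ to $\Gamma^n(y)$ and the one from $\Gamma^n(x)$ to $\Gamma^n(z)$ land in the same component of the surviving set in $\Gamma^n(x)$. Without internal connectivity, the ``infinite path of open cubes'' does not yield an infinite cluster of $p\G$. Asserting such connectivity would essentially reprove the locality statement you are after, so this cannot be waved away.

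The paper avoids this precisely by reversing the order of limits: it \emph{fixes} a cube scale $m$ for which the density $p_\lambda$ of $\lambda$-good cubes (now defined as cubes whose induced subgraph in the \emph{percolated} model contains a large $\mu(\lambda)$-regular \emph{connected} cluster) and the cross-cube connection probability $q_\lambda$ both exceed the LSS thresholds, and then observes that for fixed $m$ the map $r\mapsto p_\lambda(r)$ is continuous at $r=1$ because it depends only on a bounded domain. So one first picks $m$, then picks $r<1$; connectivity is automatic because the $\lambda$-good cluster is a cluster of $r\G$ by definition, and the bound on $q_\lambda$ survives because Lemma~\ref{lem:preciseconnection} depends only on the regularity and diameter of the two vertex sets, not on the retention parameter. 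Your approach fixes $p$ first and lets $n\to\infty$, which requires uniform-in-$n$ control that you do not have.

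A secondary issue: you cannot obtain the $\mu'$-regularity of the thinned marks by ``a second application of Lemma~\ref{lem:regular}'' --- that lemma applies to an i.i.d.\ Uniform sample, whereas the surviving marks are an independent thinning of an already selected, $\mu$-regular (hence conditioned) set. A thinning argument is possible but requires its own large-deviation computation and would produce a \emph{weaker} regularity, i.e.\ $\mu'>\mu$, not $\mu'<\mu$ as you wrote (recall that smaller $\mu$ means more thresholds to check, hence a stronger condition). Neither of these secondary points is fatal, but the connectivity gap is.
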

\begin{proof} We renormalise the model using the cubes $\mathfrak{C}(m)$. By the finite range assumption on $\eta$, we my fix $m_0$ so large, that $\eta(\Gamma^{m}(x))$ and $\eta(\Gamma^{m}(y))$ are independent for all $m\geq m_0$ and all $x,y\in m\Z^d$ with $|x-y|\geq 3m$. Using the classical domination result of Liggett et al.\ \cite{LiggeSchonStace97}, it is straightforward to show that there are retention probabilities $p^\ast_{3,d},q^\ast_{3,d}<1$, such that any ergodic $3$-independent $(p,q)$-site-bond percolation measure on $\Z^d$ with $p>p^\ast_{3,d}$ and $q>q\ast_{3,d}$ almost surely produces an infinite cluster.
	
A $\lambda$-\emph{good} cube $\Gamma\in \mathfrak{C}(m)$ is such that $\G[\Gamma]$ contains a $\mu(\lambda)$-regular cluster of size at least $m^{\lambda d}$, where $\mu(\lambda)$ is chosen such that $$\bar\delta_{\mathsf{eff}}\big(1-\lambda(1-\mu(\lambda))\big)=:\delta_\lambda<2.$$ Note that by assumption such a choice is always possible, if $\lambda$ is sufficiently close to $1$. By Theorem~\ref{thm:sublinearclusters} and the fact that $\mu$-regularity is a monotone event for a given cube, we can choose $m$ so large, that the density $p_\lambda$ of $\lambda$-good cubes is arbitrarily close to $1$. In particular, we can achieve $p_\lambda\in(p^\ast_{2,d},1)$. By Lemma~\ref{lem:preciseconnection} the maximal local clusters in two good cubes associated with neighbouring vertices in $m\Z^d$ are connected with probability at least
\[
q_\lambda:=1-\exp\left(-{Cm^{2d\lambda}\int_{1/m^{\lambda d(1-\mu(\lambda))}}^{1/2}}\int_{1/m^{\lambda d(1-\mu(\lambda))}}^{1/2} \phi(s,t,2\sqrt{d}m)\,\textup{d}s\textup{d}t\right),
\]
and we obtain, by choice of $\lambda$ and $\mu(\lambda)$
\[
q_\lambda\geq 1-\e^{-m^{\lambda d(2-\delta_{\lambda})(1+o(1))}}>q^\ast_{3,d},
\]	
for large enough $m$ and this estimate holds independently for disjoint pairs of $\lambda$-good cubes. Now given a large value $m$ such that the above estimates hold, we observe that 
\begin{itemize}
	\item the induced site-bond percolation model on cubes in $\mathfrak{C}(m)$ percolates, since it dominates a $3$-independent percolating bond-site model on $\Z^d$,
	\item if $p_\lambda (r)$ denotes the probability that a $\lambda$-good cluster exists in an $m$-cube for the percolated graph $r\G$, then we have 
	\[
	\lim_{r\uparrow 1} p_\lambda (r)=p_{\lambda},
	\]
	since the involved events only depend on a finite domain,
	\item the lower bound on $q_\lambda$ remains valid for any $\lambda$-good pair of neighbouring $m$-cubes in $r\G$.
\end{itemize}
Hence, choosing $r$	sufficiently close to $1$, we obtain that the induced site-bond percolation model on cubes in $\mathfrak{C}(m)$ still percolates for $r\G$ and thus there exists an infinite cluster in $r\G$.
\end{proof}
The following corollary establishes Theorem~\ref{thm:truncation}.
\begin{corollary}[Continuity under truncation]
Let $d\geq 2$ and $\bar{\delta}_{\mathsf{eff}}(0+)<2$. If $\G$ is has an infinite cluster, then there exists some $\ell<\infty$ such that $\G$ retains an infinite cluster after removing all edges of length at least $\ell$ from $\G$.
\end{corollary}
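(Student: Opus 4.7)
The plan is to re-run the renormalisation scheme from the proof of Proposition~\ref{prop:leftcont} and simply observe that every edge used in the construction is short, of length at most a constant multiple of the mesh size $m$. Truncating at any $\ell$ exceeding this constant multiple therefore leaves the whole construction intact, so the dominated supercritical site-bond model on $\Z^d$ still produces an infinite cluster in $\G^{\{\ell\}}$.

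\textbf{Key steps.} First, fix parameters $\lambda$ close to $1$ and $\mu(\lambda)$ exactly as in the proof of Proposition~\ref{prop:leftcont}, and choose $m_0$ sufficiently large (independent of any truncation) so that the following three conditions hold simultaneously: $(\mathrm{i})$ $\eta$ restricted to cubes in $\mathfrak{C}(m_0)$ that are at lattice distance $\geq 3$ apart are independent (by the finite range assumption on $\eta$); $(\mathrm{ii})$ by Theorem~\ref{thm:sublinearclusters} the density of $\lambda$-good cubes in $\mathfrak{C}(m_0)$ exceeds $p^\ast_{3,d}$; and $(\mathrm{iii})$ by Lemma~\ref{lem:preciseconnection} the probability that the maximal $\mu(\lambda)$-regular preclusters in two $\lambda$-good neighbouring cubes are connected by a direct edge exceeds $q^\ast_{3,d}$. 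These are the same three conditions already used in the proof of Proposition~\ref{prop:leftcont} to conclude existence of an infinite cluster via the Liggett--Schonmann--Stacey stochastic domination.

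Second, set $\ell := 2\sqrt{d}\,m_0$ and observe the following length bounds. The event that a cube $\Gamma \in \mathfrak{C}(m_0)$ is $\lambda$-good depends only on the edges of $\G[\Gamma]$, and any such edge has Euclidean length at most $\sqrt{d}\,m_0 \leq \ell$. Similarly, any edge used in Lemma~\ref{lem:preciseconnection} to connect two clusters in neighbouring cubes $\Gamma^{m_0}(x), \Gamma^{m_0}(y)$ with $|x-y|_\infty = m_0$ has Euclidean length at most $2\sqrt{d}\,m_0 = \ell$. Hence passing from $\G$ to $\G^{\{\ell\}}$ removes none of the edges involved in either the local goodness event or the adjacency event between neighbouring good cubes.

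Third, conclude. In $\G^{\{\ell\}}$, the density of $\lambda$-good cubes and the conditional connection probability between neighbouring good clusters are identical to those in $\G$, and so the induced $3$-independent site-bond percolation measure on $\Z^d$ dominates the same supercritical measure as before. By \cite{LiggeSchonStace97} there is almost surely an infinite cluster in this macroscopic percolation, and the corresponding microscopic paths in $\G^{\{\ell\}}$ yield an infinite cluster in $\G^{\{\ell\}}$. Since the argument is constructive, we even obtain $\theta_{\G^{\{\ell\}}} > 0$ for all $\ell$ large enough, which is the statement of Theorem~\ref{thm:truncation}. There is no genuine obstacle here once Proposition~\ref{prop:leftcont} is in hand; the only point requiring care is the bookkeeping that both the definition of $\lambda$-good cube and the quantitative connection bound from Lemma~\ref{lem:preciseconnection} use only edges confined to a single cube or to a pair of neighbouring cubes.
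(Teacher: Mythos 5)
Your proof is correct and takes exactly the same approach as the paper: observe that the entire renormalisation construction in Proposition~\ref{prop:leftcont} only uses edges of length at most $2\sqrt{d}\,m$, so truncating at $\ell=2\sqrt{d}\,m$ leaves it intact. The paper states this in a single sentence; you have simply spelled out the bookkeeping in more detail.
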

\begin{proof}
The auxiliary infinite cluster constructed from the coarse-grained bond-site model on $m$-cubes in the proof of Proposition~\ref{prop:leftcont} uses no edge longer than $2\sqrt{d}m$. 
\end{proof}
Finally, we establish continuity of the percolation function in all points.
\begin{proof}[Proof of Theorem~\ref{thm:continuity}]
Recall that $\G_p$ denotes the graph obtained from $\G$ by independent Bernoulli bond percolation with retention parameter $p\in [0,1]$. Since $\G$ is locally finite, it is elementary, to see that 
\[
\theta(p)=\P(\G_p \text{ contains an infinite cluster})
\]
is a right-continuous function of $p$, since $\theta(p)$ can be written as a decreasing limit of polynomials in $p$. Furthermore, a classic result of van den Berg and Keane \cite{vdBerKeane84} states that $\theta(\cdot)$ is continuous above the critical threshold. By monotonicity, it follows that $\theta(p)$ is continuous, if it is left-continuous at the critical value. This follows from Proposition~\ref{prop:leftcont} for site-percolation and an elementary coupling argument between explorations in bond percolation clusters and explorations in site percolation clusters shows that the conclusion of Proposition~\ref{prop:leftcont} remains true for bond percolation.
\end{proof}

\footnotesize{\printbibliography}

\end{document}